\documentclass[10pt]{amsart}
\setlength{\textwidth}{16cm}
\setlength{\textheight}{22cm}
\hoffset=-55pt

\numberwithin{equation}{section}
\usepackage{latexsym}
\usepackage{amsmath}
\usepackage{amssymb}
\usepackage{mathrsfs}
\usepackage{graphicx}
\usepackage{color}
\usepackage{pgfpages}
\usepackage{ifthen}
\usepackage{leftidx,tensor}
\usepackage[T1]{fontenc}
\usepackage[latin1]{inputenc}
\usepackage{mathtools}
\usepackage{comment}
\usepackage{nicefrac}

\setcounter{tocdepth}{2}
\numberwithin{equation}{section}

\newtheorem{defi}{Definition}[section]
\newtheorem{theorem}[defi]{Theorem}
\newtheorem{lemma}[defi]{Lemma}
\newtheorem{corollary}[defi]{Corollary}
\newtheorem{proposition}[defi]{Proposition}
\newtheorem{remark}[defi]{Remark}

\newenvironment{assumption}[1]
  {\innerassumption}
  {\endinnerassumption}

\newcommand{\be}{\begin{equation} \label}
\newcommand{\ee}{\end{equation}}
\newcommand{\bea}{\begin{eqnarray}\label}
\newcommand{\eea}{\end{eqnarray}}
\newcommand{\bas}{\begin{eqnarray*}}
\newcommand{\eas}{\end{eqnarray*}}
\newcommand{\bit}{\begin{itemize}}
\newcommand{\eit}{\end{itemize}}
\newcommand{\R}{\mathbb{R}}

\newcommand{\cA}{\mathcal{A}}

\newcommand{\F}{\mathbb{F}}
\newcommand{\E}{\mathbb{E}}
\newcommand{\B}{\mathbb{B}}

\newcommand{\eps}{\varepsilon}

\newcommand{\hra}{\hookrightarrow}

\newcommand{\ccor}{c_{\mbox{\tiny{cor}}}}
\newcommand{\Catm}{C_{\mbox{\tiny{atm}}}}
\newcommand{\Cocean}{C_{\mbox{\tiny{ocean}}}}
\newcommand{\Uatm}{U_{\mbox{\tiny{atm}}}}
\newcommand{\Uocean}{U_{\mbox{\tiny{ocean}}}}
\newcommand{\Ratm}{R_{\mbox{\tiny{atm}}}}
\newcommand{\Rocean}{R_{\mbox{\tiny{ocean}}}}
\newcommand{\ratm}{\rho_{\mbox{\tiny{atm}}}}
\newcommand{\rice}{\rho_{\mbox{\tiny{ice}}}}
\newcommand{\rocean}{\rho_{\mbox{\tiny{ocean}}}}
\newcommand{\tatm}{\tau_{\mbox{\tiny{atm}}}}
\newcommand{\tocean}{\tau_{\mbox{\tiny{ocean}}}}

\newcommand{\vbar}{\overline{v}}
\newcommand{\normf}{\| f \|_{C^1}}
\newcommand{\normftime}{\| f \|_{C^1(J \times \R_+)}}

\DeclareMathOperator*{\divergence}{div}

\DeclareMathOperator*{\trace}{tr}

\allowdisplaybreaks
%

\begin{document}

\title[ Strong Time Periodic Solutions to Hibler's Sea Ice Model]
{Time Periodic Solutions to Hibler's Sea Ice Model}

\author{Felix Brandt}
\address{Technische Universit\"at Darmstadt\\
        Fachbereich Mathematik\\
        Schlossgartenstrasse 7\\
        64289 Darmstadt, Germany}
\email{brandt@mathematik.tu-darmstadt.de}

\author{Matthias Hieber}
\address{Technische Universit\"at Darmstadt\\
        Fachbereich Mathematik\\
        Schlossgartenstrasse 7\\
        64289 Darmstadt, Germany}
\email{hieber@mathematik.tu-darmstadt.de}

\subjclass[2020]{35Q86, 35K59, 35B10, 86A05, 86A10}
\keywords{Hibler's sea ice model, periodic solutions, periodic wind forces, periodic ice growth rate}

\begin{abstract}
It is shown that the viscous-plastic Hibler sea ice model admits a unique, strong $T$-time periodic solution provided the given $T$-periodic forcing functions are small in suitable norms. 
This is in particular true for time periodic wind forces and time periodic ice growth rates.
\end{abstract}

\maketitle

\section{Introduction}\label{secintro}
Sea ice is a material with a complex mechanical and thermodynamical behaviour. 
Freezing sea water forms a composite of pure ice, liquid brine, air pockets and solid salt. 
The details of this formation depend on the laminar or turbulent environmental conditions, see e.g.\ \cite{Fel08} and \cite{Gol15}. 
This composite responds differently to heating, pressure or mechanical forces than for example the (salt-free) glacial ice of ice sheets. 
For a recent survey in the Notices of the AMS, see \cite{Gol20}. 

The governing equations of large-scale sea ice dynamics that form the basis of many sea ice models in climate science were suggested in a seminal paper by Hibler \cite{Hib79} in 1979. 
Sea ice is here modeled as a material with a very specific constitutive law combined with viscous-plastic rheology. 

This set of equations has been investigated numerically by various communities during the last decades, see e.g.\ \cite{Mehalle21,MK21,Meh19,MR17,SK18,DWT15,KDL15,LT09}. 
Somewhat surprisingly, and in contrast to the equations describing atmospheric or oceanic dynamics as e.g.\ the primitive equations, rigorous analysis of the sea ice equations started only very recently by the work of Brandt, Disser, Haller-Dintelmann, Hieber \cite{BDHH22} and Liu, Thomas and Titi \cite{LTT21}. 
This might be due to the  fact that the underlying set of equations is a coupled degenerate quasilinear parabolic-hyperbolic system, whose analysis is delicate.    
 
In \cite{BDHH22}, it was shown by means of quasilinear evolution equations that a suitable regularization of Hibler's model, coupling velocity, thickness and compactness of sea ice, is 
locally strongly well-posed and also globally strongly well-posed for initial data close to constant equilibria. 
A different  approach developed  in \cite{LTT21} emphasizes the hyperbolic-parabolic character of Hibler's model and proves also local strong well-posedness by means of a 
different regularization and by energy estimates. 

A different model describing the evolution of the thickness of shallow ice sheets was proposed and studied very recently by Piersanti and Temam \cite{PT22}.   

In this article, we focus on {\em periodic solutions} to Hibler's sea ice model and in particular on {\em periodic wind forces} and {\em periodic ice growth rates}.  
We prove, roughly speaking, that the sea ice system described precisely in \eqref{eq:cs} below admits a unique $T$-periodic solution in a small neighborhood of an equilibrium point of the system 
subject to certain smallness assumptions. These assumptions are in particular satisfied for sufficiently small $T$-periodic wind forces of a certain shape as well as for  $T$-periodic ice growth rates.

Time periodic problems have a long tradition, for example in fluid mechanics. 
For results in this direction, see e.g.\ \cite{GHH16}, \cite{KMT14}, \cite{Gal13}, \cite{GS04}.
For a  survey of various existing approaches in this context, we refer to the article \cite{GK18} by Galdi and Kyed.

Note, however, that in contrast to the Navier-Stokes equations, where the underlying stress tensor is linear,  the stress tensor associated to Hibler's  sea ice model is of  {\em quasilinear nature}. Hence, 
different and  new methods need to be developed. We present here an  approach which relies on a quasilinear version of the Arendt-Bu theorem \cite{AB02} on maximal periodic $L^p$-regularity 
developed in \cite{HS20}. In our main result, we show the existence and uniqueness of time periodic solutions in a suitable  neighborhood of constant equilibria. In particular,  we obtain 
periodic solutions to Hibler's sea ice system for periodic wind forces and periodic ice growth rates. 

This paper is organized as follows. 
Section~\ref{sec:mainresults} presents some preliminaries as well as our main results on the unique existence of time periodic strong solutions to Hibler's sea ice model.  
In Section~\ref{sec:maxperiodicreg}, we discuss maximal periodic $L^p$-regularity, while Section~\ref{sec:proofprepresults} is dedicated to regularity as well as Lipschitz continuity of the 
quasilinear and forcing terms. Finally, the proof of our main result is given in Section~\ref{sec:proofmainresults}.

\section{Preliminaries and Main Results}\label{sec:mainresults}
The aim of this section is twofold: After describing and recalling Hibler's sea ice model in some detail, we introduce our setting for periodic solutions and  present then 
our main result as well as the aforementioned corollaries to time periodic atmospheric wind forces and ice growth rates.
      
In Hibler's model, the momentum balance is given by the two-dimensional equation     
\begin{equation}\label{eqmomentumbalance}
    m(\dot{u} + u \cdot \nabla u) = \divergence \sigma - m \ccor n \times u - m g \nabla H + \tatm + \tocean, 
\end{equation}     
where ${u\colon (0,\infty) \times \R^2 \to \R^2}$ represents the horizontal ice velocity and $m$ the ice mass per unit area. 
Moreover, ${- m \ccor \, n \times u}$ is the Coriolis force with Coriolis parameter $\ccor > 0$, while ${n\colon \R^2 \to \R^3}$ denotes the unit vector normal to the surface, and $- m g \nabla H$ describes the force arising from changing sea surface tilt, where $g$ represents gravity and $H\colon (0,\infty) \times \R^2 \to [0,\infty)$ denotes the sea surface dynamic height.
The terms $\tatm$ and $\tocean$ represent atmospheric wind and oceanic forces. 
They are given by
\begin{equation*}
\tatm = \ratm \Catm \vert \Uatm \vert \Ratm \Uatm \quad \mbox{and} \quad \tocean = \rocean \Cocean \vert \Uocean - u \vert \Rocean (\Uocean - u),
\end{equation*}
where $\Uatm$ and $\Uocean$ are the surface winds of the atmosphere and the surface velocity of the ocean, respectively.  
Furthermore, $\Catm$ and $\Cocean$ denote air and ocean drag coefficients, $\ratm$ and $\rocean$ represent the densities for air and sea water, and $\Ratm$ and $\Rocean$ are rotation matrices acting on wind and current vectors.

Following Hibler \cite{Hib79}, the constitutive law for the ice stress is given by
\begin{equation*}
    \sigma  = 2 \eta(\eps,P) \eps + [\zeta(\eps,P) - \eta(\eps,P)]\trace(\eps)I - \frac{P}{2}I,
\end{equation*}
where $\eps = \eps(u) = \frac{1}{2}\left((\nabla u + (\nabla u)^T\right)$ is the deformation tensor, $P$ represents the pressure defined by       
\begin{equation*}
P=P(h,a)=p^* h \exp{(-c(1-a))},
\end{equation*}
for given constants $p^*>0$ and $c>0$, and $\zeta$ and $\eta$ are bulk and shear viscosities.
The latter ones are given by
\begin{equation*}
    \zeta(\eps,P)=\frac{P}{2 \triangle(\eps)}  \quad \mbox{and}  \quad \eta(\eps,P)=e^{-2} \zeta(\eps,P), 
\end{equation*}
where 
\begin{equation*}
    \triangle^2(\eps) := \left(\eps_{11}^2 + \eps_{22}^2\right)\left(1+\frac{1}{e^2}\right) + \frac{4}{e^2} \eps_{12}^2 + 2 \eps_{11} \eps_{22}\left(1- \frac{1}{e^2}\right),
\end{equation*}
and $e > 1$ is the ratio of major to minor axes of the elliptical yield curve on which the principal components of the stress lie. 
The above law describes viscous-plastic material. Note that its viscosities become singular if $\triangle$ tends to zero. 
Consequently, already Hibler suggested to regularize this behaviour by bounding the viscosities when $\triangle$ is getting small and by defining maximum values $\zeta_{\mbox{\tiny max}}$ and $\eta_{\mbox{\tiny max}}$ for $\zeta$ and $\eta$. 
Then $\zeta$ and $\eta$ are given by 
\begin{equation*}
    \zeta' = \min\{\zeta,\zeta_{\mbox{\tiny max}}\} \quad \mbox{and} \quad  \eta'= \min\{\eta,\eta_{\mbox{\tiny max}}\}.
\end{equation*}
This formulation of the viscosities leads, however, to non-smooth rheology terms.

As in \cite{BDHH22,MK21}, see also \cite{KHLFG00}, we consider for $\delta > 0$ the regularization 
\begin{equation*}
    {\triangle_\delta (\eps) := \sqrt{\delta + \triangle^2 (\eps)}}.
\end{equation*}
Hence, setting $\zeta_\delta=\frac{P}{2 \triangle_\delta(\eps)}$ and $\eta_\delta=e^{-2} \zeta_\delta$, we obtain the regularized internal ice stress
\begin{equation*}
    \sigma_\delta := 2 \eta_\delta \eps + [\zeta_\delta - \eta_\delta]\trace(\eps)I - \frac{P}{2}I. 
\end{equation*}
We consider  the above momentum equation \eqref{eqmomentumbalance}  in a bounded domain $\Omega \subset \R^2$ with boundary of class $C^2$  and on a time interval $J = (0,T')$, where $0 < T' \le \infty$. 
It is coupled to two balance equations for the mean ice thickness 
\begin{equation}\label{eq:hkappa}
h\colon J \times \Omega  \to [\kappa,\infty), \quad \mbox{for } \kappa >0 \mbox{ sufficiently small}, 
\end{equation}
and the ice compactness $a\colon J \times \Omega  \to \R$ with $a \in (0,1)$, and these balance laws are given by 
\begin{equation*}\label{eq:hA}
  \left\{ \begin{array}{ll}
	\dot{h} + \divergence(u h) = S_h + d_h \Delta h, \\[2mm]
	\dot{a} + \divergence(u a) = S_a + d_a \Delta a.  \\[2mm]
	\end{array} \right.
\end{equation*} 
The assumption that $h$ is bounded from below by a small parameter $\kappa$ means that the average ice volume per control area is at least $\kappa$.
Consequently, in each control area, there is at least some sea ice.
The presence of sea ice in each control area is also reflected by the assumption that $a \in (0,1)$.
On the other hand, the assumption that $a<1$ yields that each control area is not fully covered by thick ice.

Furthermore, let $\Delta$ be the Laplacian, $d_h>0$ and $d_a>0$ be constants, and $S_h$ as well as $S_a$ represent thermodynamic terms, defined by
\begin{align}
S_h &=  f (\nicefrac{h}{a}) a + (1-a)f(0) \label{def:sh}, \\
S_a &= \begin{cases*} \frac{f(0)}{\kappa}(1-a),& if $f(0) > 0$, \\ 0, \quad & if $f(0) < 0$,
    \end{cases*}
    \quad + \quad \begin{cases*} 0,& if $S_h > 0$, \\ \frac{a}{2 h}S_h, & if $S_h < 0$.
    \end{cases*} \label{def:sa}
\end{align}
We briefly comment on these two terms:
Following \cite{Hib79}, the term $S_h$ indicates the net ice growth or melt, and it is given by the sum of the ice growth in open water or thin ice, represented by $(1-a)f(0)$, as well as the additional growth on the area covered by thick ice, and this effect is taken into account via $f (\nicefrac{h}{a}) a$.
The term $S_a$ characterizes the way in which growth and decay change the relative areal extents of thin and thick ice.
In fact, the areal fraction of thin ice decreases rapidly under freezing conditions and increases slowly under melting conditions.
For more details on thermodynamic effects in sea ice modeling, see e.g.\ \cite[Section~2]{Gol20}.

Finally, the system is completed by Dirichlet boundary conditions for $u$ and Neumann boundary conditions for $h$ and $a$.  

The complete set of equations describing sea ice dynamics by Hibler's model subject to outer forces $(g_u,g_h,g_a)$ then reads as  
\begin{equation}\label{eq:cs}
\begin{aligned}
  \left\{ \begin{array}{rll}
	m(\dot{u} + u \cdot \nabla u) &= \divergence\sigma_\delta - m \ccor n \times u - m g \nabla H + \tatm +  \tocean + g_u, & \; x\in\Omega, \ t\in J, \\[2mm]
	\dot{h} + \divergence(u h) &= S_h + d_h \Delta h + g_h, & \; x\in\Omega, \ t \in J, \\[2mm]
	\dot{a} + \divergence(u a) &= S_a + d_a \Delta a + g_a, & \; x\in\Omega, \ t \in J, \\[2mm]
	u = \frac{\partial h}{\partial\nu} &= \frac{\partial a}{\partial\nu} = 0, & \; x\in\partial\Omega, \ t\in J. \\[2mm]
	\end{array} \right.
\end{aligned}
\end{equation}
Note that $m = \rice h$, and $h$ is subject to \eqref{eq:hkappa}.

We continue  by rewriting the regularized Hibler sea ice model \eqref{eq:cs} as a quasilinear evolution equation in the Banach space $X_0$ in the periodic setting as
\begin{equation*}
  \left\{ \begin{array}{ll}
	v^\prime(t) + A(v(t))v(t) = F(v(t)), & \; \ t\in \R,\\[2mm]
	v(t) = v(t+T), & \; \ t\in \R,  \\[2mm]
	\end{array} \right.
\end{equation*}
where $v=(u,h,a)$ denotes the principal variable. Here $1<q<\infty$, and the ground space $X_0$ is defined by
\begin{equation*}
    X_0 := X_0^u \times X_0^h \times X_0^a :=  L^q(\Omega;\R^2) \times L^q(\Omega) \times L^q(\Omega).
\end{equation*}
Moreover, setting
\begin{equation*}
    W^{1,q}_0(\Omega) := \{b \in W^{1,q}(\Omega) : b=0 \mbox{ on } \partial \Omega\} \quad \mbox{and} \quad W^{2,q}_N(\Omega) := \{b \in W^{2,q}(\Omega) : \partial_\nu b=0 \mbox{ on } 
\partial \Omega\},
\end{equation*}
the regularity space $X_1$ is of the form 
\begin{equation*}
    X_1 := X_1^u \times X_1^h \times X_1^a := W^{2,q}(\Omega;\R^2) \cap W^{1,q}_0(\Omega;\R^2) \times W^{2,q}_N(\Omega) \times W^{2,q}_N(\Omega).
\end{equation*}
The quasilinear operator $A(v)$ is given by the upper triangular matrix
\begin{equation}\label{eq:op matrix}
A(v) =
    \begin{pmatrix}
    \frac{1}{\rho_{\text{ice}} h} A^H_D (\nabla u,P(h,a)) & \frac{\partial_{h} P(h,a)}{2 \rho_{\text{ice}} h}\nabla & \frac{\partial_{a} P(h,a)}{2 \rho_{\text{ice}} h}\nabla \\
    0 & - d_h \Delta_N & 0\\
    0 & 0 & - d_a \Delta_N
    \end{pmatrix},
\end{equation}
where $A^H_D$ denotes the realization of Hibler's operator subject to Dirichlet boundary conditions on $L^q(\Omega;\R^2)$ defined below in \eqref{eq:lqreal}. 
Moreover, $\Delta_N$ denotes the Neumann Laplacian on $L^q(\Omega)$ defined by $\Delta_N = \Delta$ with $D(\Delta_N) = W^{2,q}_N(\Omega)$. 

As in \cite{BDHH22}, denoting by $\eps = (\eps)_{ij}$ the deformation or rate of strain tensor, we define the map $\mathbb{S} \colon \R^{2\times 2} \to \R^{2\times 2}$ by
\begin{equation*}
    \mathbb{S} \eps
    = \begin{pmatrix}
    (1 + \frac{1}{e^2}) \eps_{11} + (1 - \frac{1}{e^2}) \eps_{22} & \frac{1}{e^2} (\eps_{12} + \eps_{21}) \\
    \frac{1}{e^2} (\eps_{12} + \eps_{21}) & (1 - \frac{1}{e^2}) \eps_{11} + (1 + \frac{1}{e^2}) \eps_{22}
    \end{pmatrix}.
\end{equation*}
Introducing 
\begin{equation*}
    S_\delta = S_\delta(\eps,P) := \frac{P}{2}\frac{\mathbb{S}\eps}{\triangle_\delta(\eps)},
\end{equation*}
we define Hibler's operator by
\begin{equation*}
    \cA^H u := - \divergence S_\delta (u).
\end{equation*}
Following \cite{BDHH22}, $\cA^H$ is given by 
\begin{equation*}
 (\cA^H u)_i
 = \sum \limits_{j,k,l=1}^2 \frac{P}{2}\frac{1}{\triangle_\delta (\eps)}\left(\mathbb{S}_{ij}^{kl} - \frac{1}{\triangle_\delta ^2 (\eps)}(\mathbb{S} \eps)_{ik} (\mathbb{S} \eps)_{jl} \right) D_k D_l u_j - 
\frac{1}{2 \triangle_\delta (\eps)}\sum \limits_{j=1}^2 (\partial_j P) (\mathbb{S} \eps)_{ij}
\end{equation*}
for $i=1,2$ and $D_m = - \mathrm{i} \partial_m$. Denote the coefficients of the principal part of $\cA^H$ by 
\begin{equation*}
{a_{ij}^{kl}(\nabla u,P) := \frac{P}{2}\frac{1}{\triangle_\delta (\eps)}\left(\mathbb{S}_{ij}^{kl} - \frac{1}{\triangle_\delta ^2 (\eps)}(\mathbb{S} \eps)_{ik} (\mathbb{S} \eps)_{jl} \right)}.
\end{equation*}
Consider sufficiently smooth fixed $v_0 = (u_0,h_0,a_0)$.
Let for instance $v_0$ be in the time trace space, i.e., $v_0 \in X_\gamma := (X_0,X_1)_{1-\nicefrac{1}{p},p} \hra B_{qp}^{2 - \nicefrac{2}{p}}(\Omega)^4 \hra C^1(\overline{\Omega})^4$, where the last embedding is valid provided $\nicefrac{1}{p} + \nicefrac{1}{q} < \nicefrac{1}{2}$.
The coefficients are then continuous, and the linearization is given by
\begin{equation*}
[\cA^H(v_0)u]_i = \sum_{j,k,l=1}^2 a_{ij}^{kl}(\nabla u_0,P(h_0,a_0))D_kD_lu_j  - \frac{1}{2 \triangle_\delta (\eps(u_0))}\sum \limits_{j=1}^2 (\partial_j P(h_0,a_0)) (\mathbb{S} \eps(u))_{ij},
\end{equation*}
see also \cite[Section~3]{BDHH22}.
The  $L^q$-realization $A^H_D(v_0)$ of $\cA^H(v_0)$ is defined by 
\begin{equation}\label{eq:lqreal}
[A^H_D(v_0)]u:= [\cA^H(v_0)]u, \quad u \in D(A^H_D(v_0)):= W^{2,q}(\Omega;\R^2) \cap W^{1,q}_0(\Omega;\R^2).
\end{equation}

The nonlinear right-hand side is given by 
\begin{equation*}
    F(v)
    := \begin{pmatrix}
   - u \cdot \nabla u - \ccor n \times u - g \nabla H + \frac{c_1}{h}|\Uatm|\Uatm +  \frac{c_2}{h}|\Uocean-u|(\Uocean -u) + g_u \\ 
    - \divergence(u h) + S_h(v) + g_h \\ 
     - \divergence(u a) + S_a(v) + g_a
    \end{pmatrix},
\end{equation*}
where $c_1 = \ratm\Catm\Ratm \rice^{-1}$, $c_2 = \rocean\Cocean\Rocean\rice^{-1}$, $\Uatm$ and $\Uocean$ are given functions, which are  interpreted as wind and oceanic currents, respectively.

Given  $T>0$, we consider solutions $v$ to Hibler's sea ice system  within the class
\begin{equation*}
    \E
    := W^{1,p}(0,T;X_0) \cap L^p(0,T;X_1).
\end{equation*}
For $v = (u,h,a) \in \E$ and fixed $v_0 = (u_0,h_0,a_0) \in X_\gamma \hra B_{qp}^{2 - \nicefrac{2}{p}}(\Omega)^4 \hra C^1(\overline{\Omega})^4$, where the last embedding is again valid if $\nicefrac{1}{p} + \nicefrac{1}{q} < \nicefrac{1}{2}$, the norm $\| v \|_{\E}$ is defined by
\begin{equation*}
    \| v \|_{\E} := \| v \|_{L^p(0,T;X_0)} + \| \Dot{v} \|_{L^p(0,T;X_0)} + \| A(v_0) v \|_{L^p(0,T;X_0)},
\end{equation*}
with $A(v_0)$ as introduced in \eqref{eq:op matrix}.

In the sequel, we suppose that a {\em strong solution} $v$ to Hibler's viscous-plastic sea ice model has the property that $v_{|(0,T)} \in \E$, and $v$ satisfies \eqref{eq:cs} for almost every $t \in (0,T)$ as equality in $L^p(0,T;X_0)$. 
The latter space then is also the underlying data space, i.e., the space in which the external forces, also referred to as data, lie, so we set
\begin{equation*}
    \F := L^p(0,T;X_0).
\end{equation*}
It is endowed with the norm
\begin{equation*}
    \| v \|_{\F} := \| v \|_{L^p(0,T;X_0)} = \left(\int_0^T \| v(t) \|_{X_0}^p \, \mathrm{d} t\right)^{\nicefrac{1}{p}}, \quad \text{with } \| \cdot \|_{X_0} :=  \| \cdot \|_{L^q(\Omega;\R^2) \times L^q(\Omega) \times L^q(\Omega)},
\end{equation*}
where $v = (u,h,a)$ denotes again the principle variable of the system.

For a densely defined sectorial operator $B$ on a Banach space $X$, the associated analytic semigroup $T(t)$ generated by $B$ as well as $p \in (1,\infty)$, the space $D_B(1-\nicefrac{1}{p},p)$ is defined by means of
\begin{equation*}
    D_B(1-\nicefrac{1}{p},p) := \left\{x \in X : [x]_{1-\nicefrac{1}{p},p} := \left(\int_0^\infty \| t^{\nicefrac{1}{p}} B T(t) x\|_{X}^p \, \frac{\mathrm{d} t}{t}\right)^{\nicefrac{1}{p}} < \infty\right\}.
\end{equation*}
Equipped with the norm $\| x \|_{1-\nicefrac{1}{p},p} := \| x \|_X + [x]_{1-\nicefrac{1}{p},p}$, the space $D_B(1-\nicefrac{1}{p},p)$ becomes a Banach space.
Then $D_B(1-\nicefrac{1}{p},p)$ coincides with the real interpolation space $(X,D(B))_{1-\nicefrac{1}{p},p}$ with equivalent norms, see e.g.\ \cite[Theorem~III.4.10.2]{Ama95} or \cite[Proposition~3.4.4]{PS16}.

The time trace space in our situation is given by
\begin{equation*}
X_{\gamma} = (X_0,X_1)_{1-\nicefrac{1}{p},p} = D_{A^H_D(v_0)}(1-\nicefrac{1}{p},p) \times D_{\Delta_N}(1-\nicefrac{1}{p},p)  \times D_{\Delta_N}(1-\nicefrac{1}{p},p)
\end{equation*}
for $1<p<\infty$. 

Throughout this article, we also employ the notation
\begin{equation*}
    \E = \E^u \times \E^h \times \E^a \quad \mbox{and} \quad \F = \F^u \times \F^h \times \F^a.
\end{equation*}
For $k \in \{u,h,a\}$, $A_u = A^H_D(v_0)$ and $A_h = A_a = \Delta_N$, the corresponding norms are given by
\begin{equation*}
    \| k \|_{\F^k} := \| k \|_{L^p(0,T;X_0^k)} \quad \text{and} \quad \| k \|_{\E^k} := \| k \|_{L^p(0,T;X_0^k)} + \| \Dot{k} \|_{L^p(0,T;X_0^k)} + \| A_k k \|_{L^p(0,T;X_0^k)}.
\end{equation*}
By Theorem~III.4.10.2 of \cite{Ama95}, we deduce that 
\begin{equation}\label{eq:embeddinginbuc}
    \E \hra \mathrm{BUC}([0,T]; X_\gamma),
\end{equation}
where $\mathrm{BUC}([0,T]; X_\gamma)$ denotes the space of all bounded and uniformly continuous functions on the time interval $[0,T]$ with values in $X_\gamma$.

We assume that $(g_u,g_h,g_a)\colon \R \to X_0$ satisfies $(g_u,g_h,g_a)_{|(0,T)} \in L^p(0,T;X_0)$ and that $g_u$, $g_h$ and $g_a$ are time periodic with period $T$. 
In addition, we assume that $g \nabla H \in L^p(0,T;X_0^u)$ is $T$-periodic.

If $c_1=c_2=0$, the right-hand side becomes
\begin{equation}\label{eq:rhsfp}
    F_p(t,v)
    = \begin{pmatrix}
    - u \cdot \nabla u - \ccor(n \times u) - g \nabla H + g_u(t) \\ - \divergence(u h) + S_h(v) + g_h(t) \\ - \divergence(u a) + S_a(v) + g_a(t)
\end{pmatrix}.
\end{equation}
We then rewrite the quasilinear Cauchy problem in the periodic setting as
\begin{equation}\label{eq:quasilinperiodacp}
  \left\{ \begin{array}{ll}
	v^\prime(t) + A(v(t)) v(t) = F_p(t,v(t)), & \; \quad t \in \R, \\[2mm]
	v(t) = v(t+T), & \; \quad t \in \R.  \\[2mm]
	\end{array} \right.
\end{equation}

Supposing  $\vert \Uatm \vert \Uatm = c(t) h$ for some periodic $\R^2$-valued function $c(\cdot)$, we may interpret $\frac{c_1}{h} \vert \Uatm \vert \Uatm$ as a  periodic function in the sea ice momentum equation, i.e.,
\begin{equation*}
    g_u(t) = c(t) c_1. 
\end{equation*}

In the following,  we denote by $\B_r(v)$ the ball in $\E$ with center $v \in \E$ and radius $r>0$.
We state now our assumptions on the term related to the force due to changing sea surface tilt $g \nabla H$, on the thermodynamic terms $S_h$ and $S_a$ as well as on the underlying ice growth rate function $f$, on the Coriolis parameter $\ccor$ and on the ice thickness and ice compactness component of the equilibrium $h_*$ and $a_*$.

\begin{assumption}{P}\label{ass:P}
Let  $T>0$ as well as $v_* = (0,h_*,a_*)$ with $h_* > 0$ and $a_* > 0$ being constant in time and space.
For $c_s > 0$, $c_f > 0$, $R>0$ and $\delta > 0$, assume that $f \in C_b^1(\R_+)$ and that
\begin{align}
    \| g \nabla H \|_{\F^u},\| S_h(v_*) \|_{\F^h}, \| S_a(v_*) \|_{\F^a} 
    &< \frac{\delta}{4},\tag{P1}\label{ass:P1}\\
    \ccor < c_s, \quad \frac{1}{2}c_s < h_*, a_* < 2 c_s \quad \mbox{as well as} \quad \normf
    &< c_f, \quad \mbox{and} \tag{P2}\label{ass:P2}\\
    \frac{1}{4}c_s < \kappa \le h_i, a_i < 4c_s \tag{P3}\label{ass:P3}
\end{align}
holds for all $v_i = (u_i,h_i,a_i) \in \overline{\B}_R(v_*)$, $i=1,2$.
\end{assumption}

\noindent
For $p,q \in (1,\infty)$ such that $\frac{1}{2} + \frac{1}{p} + \frac{1}{q} < 1$, we observe that \eqref{ass:P3} is especially fulfilled provided \eqref{ass:P2} holds true and $R>0$ is chosen small enough. 
This is valid by virtue of the embedding presented in \eqref{eq:embEintoLinftyLinfty}, see the proof of Lemma~\ref{lem:reg} for more details.
Our  main result reads then  as follows.

\begin{theorem}\label{thm:periodicsolquasilin}
Let $\Omega \subset \R^2$ be a bounded domain with boundary of class $C^2$ and $v_* = (0,h_*,a_*)$ with $h_* > 0$ and $a_* > 0$ being constant in time and space. 
Moreover, let $T>0$, and suppose $p,q \in (1,\infty)$ satisfy
\begin{equation*}
    \frac{1}{2} + \frac{1}{p} + \frac{1}{q} < 1.
\end{equation*}
Assume that $(g_u,g_h,g_a)\colon \R \to X_0$ is $T$-periodic with $(g_u,g_h,g_a)_{|(0,T)} \in L^p(0,T;X_0)$.  
Then there exist $c_s > 0$, $c_f > 0$ and $R_1 > 0$ such that for any $R \in (0,R_1)$, there exists $\delta = \delta(R) > 0$ such that if $\| \left(g_u(\cdot), g_h(\cdot), g_a(\cdot)\right)_{|(0,T)} \|_\F < \frac{\delta}{4}$ and if $g \nabla H, S_h, S_a, \ccor, v_*$ and $f$ satisfy Assumption~\ref{ass:P}, there exists a 
unique $T$-periodic strong solution $v\colon \R \to X_0$ to \eqref{eq:quasilinperiodacp} satisfying  $v_{|(0,T)} \in \overline{\B}_R(v_*)$.
\end{theorem}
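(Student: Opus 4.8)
The plan is to recast \eqref{eq:quasilinperiodacp} as a fixed point problem centred at the equilibrium $v_*$ and to solve it through the maximal periodic $L^p$-regularity of the frozen linearisation $A(v_*)$ combined with a contraction argument, which is precisely the quasilinear Arendt--Bu strategy of \cite{HS20}. Since $v_* = (0,h_*,a_*)$ is constant in space and time, one has $\Delta_N h_* = \Delta_N a_* = 0$, $\eps(0) = 0$ and $\nabla h_* = \nabla a_* = 0$, hence $A(v_*)v_* = 0$; consequently the defect of $v_*$ as an approximate $T$-periodic solution is exactly $-F_p(\cdot,v_*)$, and by \eqref{ass:P1} together with the assumed smallness $\|(g_u,g_h,g_a)_{|(0,T)}\|_\F < \delta/4$ its $\F$-norm is bounded by a fixed multiple of $\delta$. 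Absorbing this defect is what forces the scaling $\delta = \delta(R)$.

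First I would establish maximal periodic $L^p$-regularity for the constant-coefficient operator $A(v_*)$, i.e.\ the invertibility of $\frac{d}{dt} + A(v_*)$ from $\E$ onto $\F$ with a norm $C_0$ that does not depend on the forcing. As $A(v_*)$ is upper triangular, this reduces to its diagonal blocks: the Neumann Laplacians $-d_h\Delta_N$ and $-d_a\Delta_N$, and the frozen Hibler operator $\frac{1}{\rice h_*}A^H_D(0,P(h_*,a_*))$. At $u = 0$ one has $\triangle_\delta(\eps) = \sqrt{\delta}$ and $\mathbb{S}\eps = 0$, so the coefficients collapse to $a_{ij}^{kl} = \frac{P}{2\sqrt{\delta}}\mathbb{S}_{ij}^{kl}$, a strongly elliptic constant-coefficient system under Dirichlet conditions, and by the analysis of Section~\ref{sec:maxperiodicreg} it is $\cR$-sectorial of angle less than $\pi/2$. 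Together with the spectral condition that the frequencies $\frac{2\pi}{T}\mathrm{i}\Z$ lie in $\rho(A(v_*))$, the Arendt--Bu theorem in the form of \cite{HS20} delivers the bounded periodic solution operator.

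With $C_0$ in hand, I would define, for $\bar v \in \overline{\B}_R(v_*)$, the map $\Phi(\bar v) = v$ as the unique $\E$-solution of
\begin{equation*}
  v'(t) + A(v_*)v(t) = \bigl[A(v_*) - A(\bar v(t))\bigr]\bar v(t) + F_p(t,\bar v(t)), \qquad v(t) = v(t+T),
\end{equation*}
whose fixed points are exactly the sought solutions of \eqref{eq:quasilinperiodacp}. The embedding \eqref{eq:embeddinginbuc} and the embedding of $X_\gamma$ into a space of $C^1$-functions, valid because $\frac12 + \frac1p + \frac1q < 1$ as in \eqref{eq:embEintoLinftyLinfty}, reduce everything to the regularity and Lipschitz estimates of Section~\ref{sec:proofprepresults}; in particular Lemma~\ref{lem:reg} bounds $\|A(v_*) - A(\bar v)\|_{\cL(X_1,X_0)}$ by a multiple of $\|\bar v - v_*\|$ along the trajectory and provides a Lipschitz constant for $v \mapsto F_p(\cdot,v)$ on $\overline{\B}_R(v_*)$. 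Estimating $\|\Phi(\bar v_1) - \Phi(\bar v_2)\|_\E \le C_0\,\omega(R)\,\|\bar v_1 - \bar v_2\|_\E$ with $\omega(R) \to 0$ as $R \to 0$, I fix $R = R_1$ so small that $C_0\,\omega(R) \le \tfrac12$; since $\Phi(v_*) - v_*$ solves $w' + A(v_*)w = F_p(\cdot,v_*)$ and hence $\|\Phi(v_*) - v_*\|_\E \le C_0\|F_p(\cdot,v_*)\|_\F \lesssim \delta$, choosing $\delta = \delta(R)$ small makes $\Phi$ a self-map of $\overline{\B}_R(v_*)$, and the contraction property then yields the unique fixed point by Banach's theorem.

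The main obstacle lies in the two verifications feeding the abstract scheme. First, proving that the frozen Hibler operator $A^H_D(0,P(h_*,a_*))$ is $\cR$-sectorial of angle below $\pi/2$ and that the full spectral condition $\frac{2\pi}{T}\mathrm{i}\Z \subset \rho(A(v_*))$ holds; the genuinely delicate point is the zero frequency $k = 0$, where the Neumann Laplacians are not invertible on constants, so that well-posedness of the steady mode has to be reconciled with the coupling and source structure. Second, establishing Lipschitz continuity of the quasilinear map $v \mapsto A(v)$ into $\cL(X_1,X_0)$ with a constant that is genuinely small on $\overline{\B}_R(v_*)$: this requires controlling the nonlinear coefficients $\frac{P}{2\triangle_\delta(\eps)}\bigl(\mathbb{S}_{ij}^{kl} - \triangle_\delta^{-2}(\eps)(\mathbb{S}\eps)_{ik}(\mathbb{S}\eps)_{jl}\bigr)$ and their dependence on $\nabla u$ and $P(h,a)$, for which the $C^1$-control granted by $\frac12 + \frac1p + \frac1q < 1$ is indispensable.
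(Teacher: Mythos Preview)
Your overall scheme --- freeze at $v_*$, invoke maximal periodic $L^p$-regularity, and run a contraction on $\overline{\B}_R(v_*)$ --- is exactly the paper's strategy. However, two genuine gaps remain.

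First, you linearise around $A(v_*)$ rather than the shifted operator $A_\eps(v_*)$ of \eqref{eq:Aeps}. You correctly flag that the Neumann Laplacians $-d_h\Delta_N$ and $-d_a\Delta_N$ have $0$ in their spectrum (constants lie in the kernel), so $0 \notin \rho(A(v_*))$ and the Arendt--Bu condition $1 \in \rho(e^{-TA(v_*)})$ fails; you then leave this unresolved. The paper's remedy is precisely the $\eps$-shift: one works with $A_\eps(v_*)$, whose diagonal blocks $-d_h\Delta_N + \eps$ and $-d_a\Delta_N + \eps$ are invertible, so that Proposition~\ref{prop:maxperiodicreghiblerop} applies, and compensates by moving $(0,\eps h,\eps a)$ into the right-hand side $F_{\eps,p}$. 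Without this or an equivalent device, the periodic solution operator you need does not exist.

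Second, your claim that the contraction constant $\omega(R)\to 0$ as $R\to 0$ is not correct for this nonlinearity. Several contributions to $F_{\eps,p}$ are linear or affine in $v$ with coefficients that do not vanish near $v_*$: the Coriolis term produces $c\,\ccor$, the transport terms $\divergence(uh)$ and $\divergence(ua)$ produce $c(R+h_*)$ and $c(R+a_*)$, the thermodynamic terms contribute $c\,\normf(1+\tfrac{1}{c_s})$, and the shift itself adds $c\,\eps$. In the paper's notation the Lipschitz constant is $C(R+C_k)$ with $C_k>0$ independent of $R$; see Lemma~\ref{lem:lipschitzcond} and Remark~\ref{rem:shapeofconstants}. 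Shrinking $R$ alone therefore cannot force $MC(R+C_k)<1$. The paper closes the argument by additionally taking $\eps$, $c_s$ and $c_f$ small --- this is precisely why the theorem asserts the \emph{existence} of suitable $c_s,c_f>0$ rather than allowing them arbitrary --- so that $C_k\le\frac{1}{4MC}$, and only then fixes $R$ and $\delta(R)=\frac{R}{4M}$.
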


For the atmospheric wind of the aforementioned form, we obtain the following corollary. 

\begin{corollary}[\bf Periodic wind forces]\label{cor:concreteshapeofgu}
Assume that in the situation of Theorem~\ref{thm:periodicsolquasilin}, the atmospheric wind $\Uatm$ is of the form $\vert \Uatm \vert \Uatm = c(t)h$, where $c\colon \R \to X_0^u$ satisfies   
$c_{|(0,T)} \in L^p(0,T;X_0^u)$ and is  $T$-periodic. 
Then there exist $c_s > 0$, $c_f > 0$ and $R_1 > 0$ such that for any $R \in (0,R_1)$, there is $\delta = \delta(R) > 0$ such that if  $\| \left(c(\cdot)c_1, g_h(\cdot), g_a(\cdot)\right)_{|(0,T)} \|_\F < \frac{\delta}{4}$ and if $g \nabla H, S_h, S_a, \ccor, v_*$ and $f$ fulfill Assumption~\ref{ass:P}, there exists a unique strong $T$-periodic 
solution $v\colon \R \to X_0$ to \eqref{eq:quasilinperiodacp} satisfying  $v_{|(0,T)} \in \overline{\B}_R(v_*)$.
\end{corollary}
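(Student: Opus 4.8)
The plan is to reduce the corollary directly to Theorem~\ref{thm:periodicsolquasilin} by exploiting the special structure $|\Uatm|\Uatm = c(t)h$. The crucial observation is purely algebraic: substituting this ansatz into the atmospheric forcing term appearing in the full nonlinearity $F(v)$ yields
\[
\frac{c_1}{h}\,|\Uatm|\Uatm = \frac{c_1}{h}\,c(t)\,h = c_1\,c(t),
\]
so that the factor $h^{-1}$ cancels and the wind contribution loses all dependence on the unknown thickness $h$. Consequently the otherwise $h$-dependent atmospheric term collapses to a prescribed, $h$-independent source, which can be absorbed into the external forcing of the first equation by setting $g_u(t) := c_1\,c(t)$. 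Since the oceanic coefficient is already $c_2 = 0$ in the situation of Theorem~\ref{thm:periodicsolquasilin}, this substitution turns \eqref{eq:cs} into precisely the periodic quasilinear Cauchy problem \eqref{eq:quasilinperiodacp} with right-hand side $F_p$ from \eqref{eq:rhsfp}.

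First I would verify that this redefined forcing meets the hypotheses of the main theorem. Because $c$ is $T$-periodic and $c_1 = \ratm\Catm\Ratm\rice^{-1}$ is a constant matrix factor, the product $g_u(\cdot) = c_1\,c(\cdot)$ is again $T$-periodic, and from $c_{|(0,T)} \in L^p(0,T;X_0^u)$ one obtains $g_u{}_{|(0,T)} \in L^p(0,T;X_0^u)$. Thus $(g_u, g_h, g_a)$ satisfies exactly the periodicity and integrability requirements of Theorem~\ref{thm:periodicsolquasilin}. Next I would match the smallness conditions: the hypothesis of the corollary, $\| \bigl(c(\cdot)c_1, g_h(\cdot), g_a(\cdot)\bigr)_{|(0,T)} \|_\F < \frac{\delta}{4}$, is nothing but the smallness condition $\|(g_u,g_h,g_a)_{|(0,T)}\|_\F < \frac{\delta}{4}$ of the theorem for this particular $g_u$. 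Since the remaining data $g\nabla H$, $S_h$, $S_a$, $\ccor$, $v_*$ and $f$ satisfy Assumption~\ref{ass:P} by hypothesis, all assumptions of Theorem~\ref{thm:periodicsolquasilin} are in force.

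Applying Theorem~\ref{thm:periodicsolquasilin} then produces constants $c_s, c_f, R_1 > 0$ and, for each $R \in (0,R_1)$, a threshold $\delta = \delta(R) > 0$ such that \eqref{eq:quasilinperiodacp} admits a unique $T$-periodic strong solution $v\colon \R \to X_0$ with $v_{|(0,T)} \in \overline{\B}_R(v_*)$, which is the assertion. I expect no serious obstacle here, since the entire content of the corollary is the cancellation that converts the quasilinear wind term into an admissible periodic forcing, after which the statement is an immediate specialization of the main result. The only point requiring (routine) care is confirming that $c_1\,c(\cdot)$ inherits both the $T$-periodicity and the $L^p(0,T;X_0^u)$-regularity from $c$, so that the substitution genuinely lands inside the hypotheses of Theorem~\ref{thm:periodicsolquasilin}.
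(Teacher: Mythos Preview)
Your proposal is correct and follows exactly the paper's approach: the paper explicitly notes before the corollary that under the ansatz $|\Uatm|\Uatm = c(t)h$ one may set $g_u(t) = c(t)c_1$, and then states that Corollary~\ref{cor:concreteshapeofgu} is a direct application of Theorem~\ref{thm:periodicsolquasilin}. Your write-up spells out precisely this reduction.
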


Finally, we invoke the time dependence of the ice growth rate function $f$, i.e., we consider $f$ of the form $f = f(t,h(t),a(t))$ and suppose that it is $T$-periodic.
We then assume that $f \in C_b^1(J \times \R_+)$, and instead of \eqref{ass:P1} and \eqref{ass:P2}, we presume that for $c_s > 0$, $c_f > 0$, $R>0$ and $\delta > 0$, it holds that
\begin{align}
     \| g \nabla H \|_{\F^u},\| S_h(\cdot,v_*) \|_{\F^h}, \| S_a(\cdot,v_*) \|_{\F^a} 
    &< \frac{\delta}{4}, \quad \mbox{and}\tag{PT1}\label{ass:PT1}\\
    \ccor < c_s, \enspace \frac{1}{2}c_s < h_*, a_* < 2 c_s \quad \mbox{as well as} \quad \normftime
    &< c_f. \tag{PT2}\label{ass:PT2}
\end{align}
We then obtain a result on periodic solutions for Hibler's sea ice system for periodic ice growth rates.

\begin{corollary}[\bf Periodic ice growth rate]\label{cor:periodicicegrowthrate}
Assume that in the situation of Theorem~\ref{thm:periodicsolquasilin}, the ice growth rate $f$ in $S_h$ and $S_a$ given by \eqref{def:sh} and \eqref{def:sa} is time-dependent and $T$-periodic.
Then there exist $c_s > 0$, $c_f > 0$ and $R_1 > 0$ such that for each $R \in (0,R_1)$, there is $\delta = \delta(R) > 0$ such that if $\| \left(c(\cdot)c_1, g_h(\cdot), g_a(\cdot)\right)_{|(0,T)} \|_\F < \frac{\delta}{4}$ and if $g \nabla H, S_h, S_a, \ccor, v_*$ and $f$ satisfy \eqref{ass:PT1}, \eqref{ass:PT2} and \eqref{ass:P3}, there exists a unique strong $T$-periodic solution $v\colon \R \to X_0$ to \eqref{eq:quasilinperiodacp} satisfying $v_{|(0,T)} \in \overline{\B}_R(v_*)$.
In particular, the terms $S_h(\cdot,v(\cdot))$ and $S_a(\cdot,v(\cdot))$ are $T$-periodic.
\end{corollary}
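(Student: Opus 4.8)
The plan is to reduce the statement to Theorem~\ref{thm:periodicsolquasilin} by re-running its fixed-point argument, the only difference being that the thermodynamic source terms now carry an explicit $T$-periodic time-dependence through $f = f(t,h,a)$. The crucial structural observation is that the quasilinear operator $A(v)$ is entirely independent of $f$: it is built from $P(h,a)$, the viscosities $\zeta_\delta, \eta_\delta$ and the Neumann Laplacians. Hence the maximal periodic $L^p$-regularity for the linearized problem furnished by the quasilinear Arendt--Bu theorem of \cite{HS20} is unaffected, and all modifications are confined to the nonlinear right-hand side $F_p$, more precisely to its $h$- and $a$-components $S_h(t,v)$ and $S_a(t,v)$. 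Since the framework of \cite{HS20} already accommodates $t$-dependent right-hand sides (the terms $g_u(t), g_h(t), g_a(t)$ in $F_p$ are $t$-dependent), inserting a further $t$-dependence through $S_h, S_a$ fits the same setting once the requisite estimates are established.

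First I would verify that the regularity and Lipschitz estimates of Lemma~\ref{lem:reg} persist for the time-dependent Nemytskii operators $S_h(\cdot,\cdot)$ and $S_a(\cdot,\cdot)$. For fixed $t$, the maps $v \mapsto S_h(t,v)$ and $v \mapsto S_a(t,v)$ have exactly the algebraic form treated in the time-independent case, namely a composition $f(t,h/a)$ multiplied by $a$, plus the term $(1-a)f(t,0)$, together with the sign-dependent contributions in $S_a$. Thus the pointwise-in-$t$ estimates go through with the scalar factor $\normf$ replaced by $\normftime$. Since $f \in C_b^1(J \times \R_+)$, this bound is uniform in $t$, and combined with the embedding $\E \hra \mathrm{BUC}([0,T];X_\gamma)$ from \eqref{eq:embeddinginbuc} it yields that $t \mapsto S_h(t,v(t))$ and $t \mapsto S_a(t,v(t))$ belong to $\F^h$ and $\F^a$ for every $v \in \E$, with measurability in $t$ coming from the continuity of $f$ in its first variable and the local Lipschitz constants on $\overline{\B}_R(v_*)$ controlled independently of $t$.

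With these estimates in hand, I would carry out the self-mapping and contraction argument for the fixed-point operator on $\overline{\B}_R(v_*)$ precisely as in the proof of Theorem~\ref{thm:periodicsolquasilin}. The roles previously played by \eqref{ass:P1} and \eqref{ass:P2} are now taken over by \eqref{ass:PT1} and \eqref{ass:PT2}: the smallness $\|S_h(\cdot,v_*)\|_{\F^h}, \|S_a(\cdot,v_*)\|_{\F^a}, \|g \nabla H\|_{\F^u} < \delta/4$ supplies the required bound on $F_p(\cdot,v_*)$, while $\normftime < c_f$ controls the Lipschitz constant; the structural constraint \eqref{ass:P3} is retained unchanged. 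Choosing $c_s, c_f, R_1$ and then $\delta = \delta(R)$ as in the theorem, the operator becomes a self-map and a contraction on $\overline{\B}_R(v_*)$, which produces the unique $T$-periodic strong solution $v$ with $v_{|(0,T)} \in \overline{\B}_R(v_*)$. The asserted $T$-periodicity of $S_h(\cdot,v(\cdot))$ and $S_a(\cdot,v(\cdot))$ then follows at once, since both $f$ and $v$ are $T$-periodic in time.

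The hard part will be the first step: checking that inserting an explicit, merely $C^1$-in-time coefficient into the source terms spoils neither their mapping property into the data space $\F$ nor the smallness of their Lipschitz constants. The delicacy lies in the product and chain-rule structure of $S_h = f(t,h/a)\,a + (1-a)f(t,0)$ together with the sign-based case distinctions in $S_a$, which may now switch not only in $x$ and with $v$ but also in $t$. One must confirm that the uniform $C^1$-bound on $f$ absorbs this extra $t$-dependence, so that the estimates remain uniform across $[0,T]$ and the contraction constant can still be made strictly less than one.
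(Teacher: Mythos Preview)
Your proposal is correct and follows essentially the same route as the paper: the paper defines the fixed-point map $\Phi_t$ with right-hand side $F_{\eps,p,t}$, invokes the second parts of Lemmas~\ref{lem:reg} and~\ref{lem:lipschitzcond} (which establish precisely the mapping and Lipschitz properties for time-dependent $f$ that you identify as the key step), and then reruns the self-map/contraction argument of Theorem~\ref{thm:periodicsolquasilin} with \eqref{ass:PT1}, \eqref{ass:PT2}, \eqref{ass:P3} in place of \eqref{ass:P1}, \eqref{ass:P2}, \eqref{ass:P3}. The $T$-periodicity of $S_h(\cdot,v(\cdot))$ and $S_a(\cdot,v(\cdot))$ is likewise deduced directly from the $T$-periodicity of $f$ and of the constructed solution $v$.
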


\section{Maximal periodic $L^p$-regularity}\label{sec:maxperiodicreg}
For an arbitrary Banach space $X$, $1<p<\infty$ and a linear operator $A \colon D(A) \to X$, set 
\begin{equation*}
    \F = L^p(0,T;X) \quad \mbox{and} \quad \E = W^{1,p}(0,T;X) \cap L^p(0,T;D(A)).
\end{equation*}
We say that {\em $A$ admits maximal periodic $L^p$-regularity} if for every $f \in \F$, there is a unique solution $u \in \E$ to
\begin{equation*}
  \left\{ \begin{array}{ll}
	u^\prime(t) - A u(t) = f(t), \; \quad t \in (0,T), \\[2mm]
	u(0) = u(T).  \\[2mm]
	\end{array} \right.
\end{equation*}
The closed graph theorem then yields the existence of a constant $M>0$ such that
\begin{equation}\label{eq:consofmaxperiodicreg}
\| u \|_{\E} \le M \| f \|_{\F}.
\end{equation}

We will make use of the following characterization of maximal periodic $L^p$-regularity due to Arendt and Bu \cite{AB02}.
For more information on maximal regularity and its periodic version, we refer for example to \cite{Ama95, Lun95, DHP03, KW04, GK18}.

\begin{proposition}[Arendt, Bu]\label{prop:charmaxperiodicreg}
Let $X$ be a Banach space and $A \colon D(A) \to X$ be the generator of a $C_0$-semigroup on $X$. 
Then $A$ admits maximal periodic $L^p$-regularity if and only if $1 \in \rho(e^{TA})$ and $A$ admits maximal $L^p$-regularity.
\end{proposition}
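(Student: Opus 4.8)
The plan is to run everything through the variation-of-constants formula, which turns the periodic problem into a single linear equation for the initial value. Since $A$ generates a $C_0$-semigroup $(e^{tA})_{t\ge0}$, a function $u \in \E$ solving $u' - Au = f$ on $(0,T)$ is automatically a mild solution, so that
\[
u(t) = e^{tA}u(0) + \int_0^t e^{(t-s)A} f(s)\,ds, \qquad t \in [0,T],
\]
and the periodicity requirement $u(0) = u(T)$ is equivalent to the identity $(I - e^{TA})\,u(0) = \int_0^T e^{(T-s)A} f(s)\,ds =: w_f$. The entire argument is organized around the solvability and the regularity of this identity.

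For the implication from $1 \in \rho(e^{TA})$ together with maximal $L^p$-regularity to maximal periodic $L^p$-regularity, I would first recall that maximal $L^p$-regularity forces $A$ to generate an analytic semigroup, so that $X_\gamma$ is precisely the space of admissible initial data, i.e.\ $e^{\cdot A}u_0 \in \E$ if and only if $u_0 \in X_\gamma$. Given $f \in \F$, maximal $L^p$-regularity places the convolution $v(t) := \int_0^t e^{(t-s)A}f(s)\,ds$ in $\E$ with $v(0)=0$, and the trace embedding $\E \hra \mathrm{BUC}([0,T];X_\gamma)$ (the analogue of \eqref{eq:embeddinginbuc}) gives $w_f = v(T) \in X_\gamma$. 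Because $e^{TA}$ commutes with the resolvents of $A$, so does $(I-e^{TA})^{-1}$; hence $(I-e^{TA})^{-1}$ is bounded on $X$ and on $D(A)$, and therefore, by real interpolation, on $X_\gamma = (X,D(A))_{1-1/p,p}$. Thus $u_0 := (I-e^{TA})^{-1}w_f \in X_\gamma$, the function $u := e^{\cdot A}u_0 + v$ lies in $\E$ and is $T$-periodic, which yields existence, while the bound \eqref{eq:consofmaxperiodicreg} follows from the closed graph theorem. Uniqueness is immediate: the difference $w$ of two solutions solves the homogeneous equation, whence $w(t) = e^{tA}w(0)$ with $(I-e^{TA})w(0)=0$, so that $w(0)=0$ and $w \equiv 0$.

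For the converse I would start from the bounded periodic solution operator $\F \to \E$ and extract both conditions from the identity above. Injectivity of $I - e^{TA}$: a fixed point $x = e^{TA}x$ gives, after replacing $x$ by $(\lambda - A)^{-1}x \in D(A)$ for some $\lambda \in \rho(A)$ (still a fixed point), the genuine $\E$-solution $e^{\cdot A}(\lambda-A)^{-1}x$ of the homogeneous periodic problem, which must vanish by uniqueness; injectivity of the resolvent then forces $x = 0$. Surjectivity of $I-e^{TA}$ is read off from the solvability of the identity for the dense set of right-hand sides $\{w_f : f \in \F\}$, combined with the a priori estimate to control the inverse. Finally, maximal $L^p$-regularity is recovered by solving the periodic problem for a given $f$ and subtracting the semigroup correction $e^{\cdot A}u_{\mathrm{per}}(0)$, which is admissible since $u_{\mathrm{per}}(0) \in X_\gamma$, so as to reach the solution with vanishing initial value.

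The main obstacle is this converse direction. The periodic solution operator a priori only controls data lying in the trace space $X_\gamma$, whereas $1 \in \rho(e^{TA})$ is a statement on all of $X$; bridging this gap — in particular obtaining surjectivity and a bounded inverse of $I-e^{TA}$ on the full space $X$ rather than merely on $X_\gamma$, and disentangling the non-periodic maximal regularity from the periodic solution operator without circularity — is the delicate point, and is where the regularization arguments and the closed-graph/a-priori estimates must be combined with care.
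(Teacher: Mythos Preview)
The paper does not prove this proposition at all; it is quoted from Arendt and Bu \cite{AB02} and used as a black box in the proof of Proposition~\ref{prop:maxperiodicreghiblerop}. So there is no ``paper's own proof'' to compare against.

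That said, a brief assessment of your sketch: the forward implication is the standard variation-of-constants argument and is correct as written --- the key points (maximal $L^p$-regularity forces analyticity, the trace embedding puts $w_f$ in $X_\gamma$, and $(I-e^{TA})^{-1}$ acts on $X_\gamma$ by interpolation) are all in order. For the converse, you correctly locate the genuine difficulty: the periodic solution operator only controls traces in $X_\gamma$, whereas $1\in\rho(e^{TA})$ is a statement on all of $X$, and one also needs maximal $L^p$-regularity without assuming it. In \cite{AB02} this is not handled by your semigroup route at all; Arendt and Bu characterize maximal periodic $L^p$-regularity via operator-valued discrete Fourier multipliers (the sequence $\{ik(ik-A)^{-1}:k\in\Z\}$ being an $L^p$-multiplier on the torus), and the equivalence with maximal $L^p$-regularity plus the spectral condition then comes out of comparing the discrete and continuous multiplier conditions. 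Your approach can also be pushed through, but the surjectivity/boundedness step you flag does require an additional argument (for instance, first deducing maximal $L^p$-regularity --- hence analyticity --- by a suitable extension/restriction trick, and only afterwards lifting $(I-e^{TA})^{-1}$ from $X_\gamma$ to $X$ via the smoothing of the now-analytic semigroup).
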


For $v_* = (0,h_*,a_*)$ with $h_*>0$ and $a_*>0$ being constant in time and space and $\eps > 0$, we introduce 
\begin{equation}\label{eq:Aeps}
    A_\eps(v_*)
    := \begin{pmatrix}
    \frac{1}{\rice h_*}A_D^H(v_*) & \frac{\partial_h P_*}{2 \rice h_*}\nabla & \frac{\partial_a P_*}{2 \rice h_*}\nabla \\ 0 & - d_h \Delta_N + \eps & 0 \\ 0 & 0 & - d_a \Delta_N + \eps
\end{pmatrix}.
\end{equation}

We now show that $A_\eps(v_*)$ admits maximal periodic $L^p$-regularity. 

\begin{proposition}\label{prop:maxperiodicreghiblerop}
Let $s \in (1,\infty)$ as well as $v_* = (0,h_*,a_*)$ with $h_*>0$ and $a_*>0$ being constant in time and space. 
Then the linearized operator $A_\eps(v_*)$ admits maximal periodic $L^s$-regularity for all $\eps > 0$.
\end{proposition}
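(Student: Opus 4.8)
The plan is to invoke the Arendt--Bu characterization (Proposition~\ref{prop:charmaxperiodicreg}): it suffices to establish two facts about $A_\eps(v_*)$, namely that it generates a $C_0$-semigroup admitting maximal $L^s$-regularity, and that $1 \in \rho(e^{T A_\eps(v_*)})$. Since $A_\eps(v_*)$ is upper triangular, I would exploit its block structure throughout. The diagonal blocks are $\frac{1}{\rice h_*} A_D^H(v_*)$ acting on $L^q(\Omega;\R^2)$ and the two shifted Neumann Laplacians $-d_h \Delta_N + \eps$ and $-d_a \Delta_N + \eps$ on $L^q(\Omega)$. The off-diagonal entries $\frac{\partial_h P_*}{2 \rice h_*}\nabla$ and $\frac{\partial_a P_*}{2\rice h_*}\nabla$ are first-order and hence lower-order relative to the second-order diagonal blocks, so they should be treatable as a relatively bounded perturbation that does not destroy either maximal regularity or the semigroup property.

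First I would handle maximal $L^s$-regularity of the diagonal part. For the Neumann Laplacian blocks this is classical: $-d_h\Delta_N$ and $-d_a\Delta_N$ are sectorial generators of bounded analytic semigroups on $L^q(\Omega)$ admitting maximal $L^s$-regularity, and adding the constant $+\eps$ preserves this. For the Hibler block, I would appeal to the analysis already carried out in \cite{BDHH21}: there it is shown that $A_D^H(v_*)$, the linearization at the constant equilibrium, is (after the positive scaling by $\frac{1}{\rice h_*}$) a generator of an analytic semigroup with maximal $L^s$-regularity on $L^q(\Omega;\R^2)$. Maximal regularity of a triangular operator matrix with lower-order off-diagonal coupling then follows from the diagonal case, since the perturbation maps $X_1$ into $X_0$ with small relative bound; here I would use that the gradient maps $W^{2,q}_N(\Omega)$ into $W^{1,q}(\Omega;\R^2)$, which embeds into the interpolation space on which the first-order term is lower order.

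Second, I would verify the spectral condition $1 \in \rho(e^{T A_\eps(v_*)})$. By the spectral mapping inclusion for the triangular structure, it is enough that none of the diagonal blocks has $0$ in the spectrum of the corresponding $e^{T\cdot}$, equivalently that $2\pi\mathrm{i}k/T \notin \sigma$ of each diagonal generator for all $k \in \Z$, and in particular that $0$ lies in the resolvent set. For the shifted Neumann blocks the shift by $\eps>0$ is exactly what guarantees invertibility: $-d\Delta_N + \eps$ has spectrum in $[\eps,\infty)$, so $0 \in \rho$ and the whole imaginary axis (times $2\pi/T$) is avoided, giving $1 \in \rho(e^{T(-d\Delta_N+\eps)})$. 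For the Hibler block one uses that $\frac{1}{\rice h_*}A_D^H(v_*)$, with Dirichlet conditions, is invertible and sectorial with spectrum in a sector bounded away from the imaginary axis except at the origin, which the Dirichlet condition excludes; hence its semigroup is exponentially stable and $1 \in \rho$ of the time-$T$ map.

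The main obstacle I expect is the Hibler diagonal block rather than the routine Neumann pieces: one must confirm both that $A_D^H(v_*)$ genuinely has maximal $L^s$-regularity and an exponentially decaying semigroup (so that $1\in\rho(e^{T A_D^H(v_*)/\rice h_*})$), and that the off-diagonal gradient coupling can be absorbed without spoiling either property. I would resolve the first point by direct reference to the ellipticity and generation results for $A_D^H$ established in \cite{BDHH21}, and the second by the standard perturbation theory for maximal regularity (lower-order, relatively small perturbations of maximal-regularity generators again have maximal regularity, and the triangular form means the perturbed time-$T$ map differs from the diagonal one by an operator that cannot introduce $1$ into the spectrum). Assembling these two verifications and feeding them into Proposition~\ref{prop:charmaxperiodicreg} completes the proof.
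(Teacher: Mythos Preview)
Your proposal is correct and follows essentially the same Arendt--Bu route as the paper: verify maximal $L^s$-regularity and the spectral condition for the triangular operator, handling the diagonal blocks via \cite{BDHH21} for the Hibler part and classical results for the shifted Neumann Laplacians. The paper is slightly more economical in that Lemma~7.3 of \cite{BDHH21} is cited directly for both the invertibility of $A_D^H(v_*)$ and the maximal regularity of the \emph{full} triangular matrix $A(v_*)$ (so no separate perturbation argument for the off-diagonal $\nabla$-terms is needed), and it deduces $1 \in \rho(e^{-T A_\eps(v_*)})$ simply from $0 \in \rho(-A_\eps(v_*))$ together with analyticity; note also that since the equation reads $v' + A_\eps(v_*)v = \dots$, the generator is $-A_\eps(v_*)$, so the relevant time-$T$ map is $e^{-T A_\eps(v_*)}$ rather than $e^{T A_\eps(v_*)}$.
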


\begin{proof}

By Lemma 7.3  of \cite{BDHH22},  $A_D^H(v_*)$ is invertible on $L^r(\Omega;\R^2)$.
The invertibility of $-d_{h}\Delta_N + \eps$ and $-d_{a}\Delta_N + \eps$ on $L^r(\Omega;\R^2)$ for any $\eps >0$ and  $1<r<\infty$, $d_h>0$ and $d_a>0$, yields that $0 \in \rho(-A_\eps(v_*))$ by virtue of the upper triangular structure of $A_\eps(v_*)$, and it thus holds that $1 \in \rho(e^{-T A_\eps(v_*)})$.

Lemma~$7.3$ of  \cite{BDHH22} states that $A(v_*)$ has the property of maximal $L^s$-regularity on $L^r(\Omega;\R^2)$, $r \in (1,\infty)$.
Observing  that   $-d_{h}\Delta_N + \eps$  and $-d_{a}\Delta_N + \eps$ have maximal $L^s$-regularity, the upper triangular structure of $A_\eps(v_*)$ implies again that $A_\eps(v_*)$ has maximal $L^s$-regularity. 
The assertion follows then by Proposition~\ref{prop:charmaxperiodicreg}. 
\end{proof}

\section{Estimates for the quasilinear term and the right-hand side}\label{sec:proofprepresults}
Let $J$ be the open interval $(0,T)$. 
Taking into account the translation in the second and the third equation described in Section~\ref{sec:maxperiodicreg}, we need to adjust the right-hand side accordingly. 
To this end, we set
\begin{equation*}
    F_{\eps,p}(t,v) := \begin{pmatrix}
- u \cdot \nabla u - \ccor(n \times u) - g \nabla H + g_u(t) \\ - \divergence(u h) + S_h(v) + \eps h + g_h(t) \\ - \divergence(u a) + S_a(v) + \eps a + g_a(t)
\end{pmatrix},
\end{equation*}
so clearly $F_p = F_{\eps,p} - (0,\eps h, \eps a)$, where $F_p$ is as defined in \eqref{eq:rhsfp}.
To distinguish the latter case from the situation of $f$ being explicitly time-dependent, we define $F_{\eps,p,t}$ by
\begin{equation*}
    F_{\eps,p,t}(t,v) := \begin{pmatrix}
- u \cdot \nabla u - \ccor(n \times u) - g \nabla H + g_u(t) \\ - \divergence(u h) + S_h(t,v) + \eps h + g_h(t) \\ - \divergence(u a) + S_a(t,v) + \eps a + g_a(t)
\end{pmatrix}.
\end{equation*}

\begin{lemma}\label{lem:reg}
Let  $p,q \in (1,\infty)$ such that $\frac{1}{2} + \frac{1}{p} + \frac{1}{q} < 1$.
Assume that for $c_s>0$ and $c_f>0$ arbitrary, \eqref{ass:P2} is satisfied, let $R_0 > 0$ be small enough such that \eqref{ass:P3} holds for $v \in \overline{\B}_{R_0}(v_*)$, and let $(g_u,g_h,g_a)\colon \R \to X_0$ be $T$-periodic with $(g_u,g_h,g_a)_{|(0,T)} \in \F$.
Then $F_{\eps,p}(\cdot,v(\cdot)) \in \F$ for all $v \in \overline{\B}_{R_0}(v_*)$.

If $f$ depends explicitly on time, assume that \eqref{ass:PT2} is satisfied instead of \eqref{ass:P2}.
It then follows that $F_{\eps,p,t}(\cdot,v(\cdot)) \in \F$ for all $v \in \overline{\B}_{R_0}(v_*)$.
\end{lemma}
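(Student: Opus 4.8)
The plan is to verify membership in $\F = L^p(0,T;X_0)$ slot by slot, exploiting that the positivity constraints \eqref{ass:P2}--\eqref{ass:P3}, combined with the space--time regularity encoded in $\E$, make every nonlinear contribution pointwise bounded. The starting point is the embedding \eqref{eq:embeddinginbuc}, $\E \hra \mathrm{BUC}([0,T];X_\gamma)$. Since each component of $X_\gamma = (X_0,X_1)_{1-1/p,p}$ is a Besov space of type $B^{2(1-1/p)}_{q,p}(\Omega)$, and since the hypothesis $\frac12+\frac1p+\frac1q<1$ is equivalent to $2(1-\tfrac1p)-\tfrac2q>1$, the Sobolev embedding yields $X_\gamma \hra C^1(\overline\Omega)$ in each of the three slots. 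Hence, for every $v=(u,h,a)\in\overline{\B}_{R_0}(v_*)$, the functions $u,h,a$ together with $\nabla u,\nabla h,\nabla a$ and $\divergence u$ are uniformly bounded on $[0,T]\times\overline\Omega$, with a bound depending only on $R_0$ and the embedding constant.

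First I would dispose of the terms that are linear or prescribed. The Coriolis term $-\ccor(n\times u)$ and the translation terms $\eps h,\eps a$ are bounded multiples of bounded components of $v$, hence lie in $L^\infty(0,T;X_0)\subset\F$ on the bounded interval $(0,T)$, while the prescribed data $g\nabla H$ and $(g_u,g_h,g_a)$ belong to $\F$ by assumption. For the convective contributions I write $\divergence(uh)=h\,\divergence u+u\cdot\nabla h$ and $\divergence(ua)=a\,\divergence u+u\cdot\nabla a$, so that $u\cdot\nabla u$, $\divergence(uh)$ and $\divergence(ua)$ are finite products of the uniformly bounded quantities above; each therefore lies in $L^\infty((0,T)\times\Omega)\subset L^\infty(0,T;L^q(\Omega))\subset\F$, using $|\Omega|<\infty$ and $T<\infty$.

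The genuine obstacle is the thermodynamic pair $S_h(v)$ and $S_a(v)$ from \eqref{def:sh}--\eqref{def:sa}, since these involve a nonlinear superposition with $f$ and, for $S_a$, a pointwise case distinction. Here I invoke \eqref{ass:P3}: as $h$ and $a$ are bounded below by $\kappa>\tfrac14 c_s$ and above by $4c_s$ on $\overline{\B}_{R_0}(v_*)$, the argument $h/a$ takes values in a fixed compact subinterval of $\R_+$ (namely within $[\kappa/(4c_s),\,4c_s/\kappa]$). Since $f\in C_b^1(\R_+)$, the composition $f(h/a)$ is bounded measurable, and $S_h=f(h/a)\,a+(1-a)f(0)$ is bounded by a constant depending only on $\normf$ and $c_s$, placing $S_h(v)\in L^\infty((0,T)\times\Omega)\subset\F^h$. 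For $S_a$ the first summand is a fixed multiple of $1-a$ determined by the (constant) sign of $f(0)$, while the second equals $0$ or $\frac{a}{2h}S_h$ according to the sign of $S_h$; because $S_h$ is continuous in $(t,x)$, the sets $\{S_h>0\}$ and $\{S_h<0\}$ are open and the resulting function is measurable, and the factor $a/(2h)$ is bounded by \eqref{ass:P3}. Thus $S_a(v)$ is bounded measurable, whence $S_a(v)\in\F^a$, and assembling the three slots gives $F_{\eps,p}(\cdot,v(\cdot))\in\F$.

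Finally, the explicitly time-dependent case runs verbatim after replacing $f\in C_b^1(\R_+)$ by $f\in C_b^1(J\times\R_+)$ and \eqref{ass:P2} by \eqref{ass:PT2}: the bound $\normftime<c_f$ supplies uniform control of $f$ and its derivatives on $J$ times the same compact range of $h/a$, so that $S_h(t,v)$ and $S_a(t,v)$ stay bounded and jointly measurable in $(t,x)$, yielding $F_{\eps,p,t}(\cdot,v(\cdot))\in\F$. I expect the measurability and uniform boundedness of the piecewise-defined $S_a$, rather than any delicate estimate, to be the only point genuinely requiring care.
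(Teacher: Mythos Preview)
Your argument is correct but proceeds along a different line from the paper. You exploit the trace embedding $\E\hra \mathrm{BUC}([0,T];X_\gamma)$ together with $X_\gamma\hra C^1(\overline\Omega)$ (valid precisely because $2(1-\tfrac1p)-\tfrac2q>1$), so that every first-order quantity is uniformly bounded and all bilinear terms land in $L^\infty((0,T)\times\Omega)\subset\F$ at once. The paper instead invokes the mixed derivative theorem to obtain $\E\hra H^{\theta,p}(J;H^{2(1-\theta),q}(\Omega))^4$ and, choosing suitable conjugate exponents $\alpha,\alpha'$, derives the pair of embeddings $\E\hra L^{\alpha p}(J;W^{1,\alpha q}(\Omega))^4$ and $\E\hra L^{\alpha' p}(J;W^{1,\alpha' q}(\Omega))^4$; each product such as $u\cdot\nabla u$ or $h\,\divergence u$ is then estimated by H\"older's inequality, yielding bounds of the shape $c\|v\|_\E^2$.

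Your route is shorter and more transparent for this lemma alone, since pointwise boundedness immediately handles the thermodynamic terms and the piecewise definition of $S_a$. The paper's route, on the other hand, produces genuinely bilinear estimates in the $\E$-norm; these are exactly the ingredients reused in the subsequent Lipschitz lemma and in tracking the constants $C$, $C_k$ through Remark~\ref{rem:shapeofconstants}. So the paper pays a small price in this lemma to set up the machinery for the next one, while your approach would require a separate argument (or the same $C^1$ embedding again) when it comes to the Lipschitz estimates.
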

\begin{proof}
In the sequel, for $\beta_1$, $\beta_2 > 0$ as well as $r$, $s \in (1,\infty)$, we denote by $H^{\beta_1,s}(J;H^{\beta_2,r}(\Omega))$ the Bessel potential spaces $H^{\beta_1,s}$ on the time interval $J$ with values in the Bessel potential spaces $H^{\beta_2,r}(\Omega)$ on the bounded domain $\Omega$.
The mixed derivative theorem, see e.g.\ Cor.~$4.5.10$ in \cite{PS16}, yields for any $\theta \in (0,1)$ the embedding
\begin{equation}\label{eq:appmdthm}
    \E \hra H^{\theta,p}(J;H^{2(1-\theta),q}(\Omega))^4.
\end{equation}
In order to justify the remark on \eqref{ass:P3} above, we employ Sobolev embeddings to get
\begin{equation}\label{eq:embinterpolspaceintoLinftyLinfty}
    H^{\theta,p}(J;H^{2(1-\theta),q}(\Omega))^4 \hra L^\infty(J;L^\infty(\Omega))^4
\end{equation}
provided $\theta - \frac{1}{p} > 0$ and $2(1-\theta) - \frac{2}{q} > 0$, which is equivalent to $\frac{1}{p} < \theta < 1 - \frac{1}{q}$.
The assumption that $\frac{1}{2} + \frac{1}{p} + \frac{1}{q} < 1$ especially implies that there exists such $\theta \in (0,1)$.
Thus, combining \eqref{eq:appmdthm} and \eqref{eq:embinterpolspaceintoLinftyLinfty}, we infer that
\begin{equation}\label{eq:embEintoLinftyLinfty}
    \E \hra L^\infty(J;L^\infty(\Omega))^4.
\end{equation}

On the other hand, the assumptions on $p$ and $q$ imply that we can fix suitable $\alpha, \alpha^\prime \in (1,\infty)$ with $\frac{1}{\alpha} + \frac{1}{\alpha^\prime} = 1$ such that there are $\theta_1, \theta_2 \in (0,1)$ with
\begin{equation*}
    \theta_1 - \frac{1}{p} > -\frac{1}{\alpha p}, \quad 2(1-\theta_1) - \frac{2}{q} > 1 - \frac{2}{\alpha q}, \quad \theta_2 - \frac{1}{p} > -\frac{1}{\alpha^\prime p} \quad \mbox{and} \quad 2(1-\theta_2) - \frac{2}{q} > 1 - \frac{2}{\alpha^\prime q}.
\end{equation*}
Recalling that the spatial dimension is $2$, we deduce from Sobolev embeddings that
\begin{equation*}
    H^{\theta_1,p}(J;H^{2(1-\theta_1),q}(\Omega)) \hra L^{\alpha p}(J;W^{1,\alpha q}(\Omega)) \quad \mbox{and} \quad H^{\theta_2,p}(J;H^{2(1-\theta_2),q}(\Omega)) \hra L^{\alpha^\prime p}(J;W^{1,\alpha^\prime q}(\Omega)).
\end{equation*}
In conjunction with \eqref{eq:appmdthm}, the previous embeddings result in
\begin{equation}\label{eq:embEintoLalphapW1alphq}
    \E \hra L^{\alpha p}(J;W^{1,\alpha q}(\Omega))^4 \quad \mbox{and} \quad \E \hra L^{\alpha^\prime p}(J;W^{1,\alpha^\prime q}(\Omega))^4.
\end{equation}

To prove the assertion, let $v \in \E$. 
H\"older's inequality and \eqref{eq:embEintoLalphapW1alphq} yield 
\begin{equation*}
\begin{aligned}
    \| u \nabla u \|_{\F^u} 
    &\le \| u \|_{L^{\alpha p}(J;L^{\alpha q}(\Omega)^2)} \| \nabla u \|_{L^{\alpha^\prime p}(J;L^{\alpha^\prime q}(\Omega)^{2 \times 2})}\\
    &\le c \| u \|_{L^{\alpha p}(J;W^{1,\alpha q}(\Omega)^2)} \| u \|_{L^{\alpha^\prime p}(J;W^{1,\alpha^\prime q}(\Omega)^2)}\\
    &\le c \| v \|_{\E}^2 < \infty.
\end{aligned}
\end{equation*}

Similarly as above, and making use of the product rule, we deduce that
\begin{align*}
    \| \divergence(u h) \|_{\F^h}
    &\le \| h \divergence(u) \|_{L^p(J;L^q(\Omega))} + \| u \nabla h \|_{L^p(J;L^q(\Omega))}.
    \intertext{The treatment of the second term is analogous to the previous estimate of $u \nabla u$, while for the first term, we argue that}
    \| h \divergence(u) \|_{L^p(J;L^q(\Omega))}
    &\le \| h \|_{L^{\alpha p}(J;L^{\alpha q}(\Omega))} \| \divergence(u) \|_{L^{\alpha^\prime p}(J;L^{\alpha^\prime q}(\Omega))}\\
    &\le c \| h \|_{L^{\alpha p}(J;L^{\alpha q}(\Omega))} \| u \|_{L^{\alpha^\prime p}(J;W^{1,\alpha^\prime q}(\Omega)^2)},
\end{align*}
so in total, arguing likewise for $\divergence(u a)$, we obtain the estimates
\begin{equation*}
    \|  \divergence(u h) \|_{\F^h} \le 2 c \| v \|_{\E}^2 < \infty \quad \mbox{and} \quad 
\|  \divergence(u a) \|_{\F^a} \le 2 c \| v \|_{\E}^2 < \infty.
\end{equation*}

We next deal with the Coriolis term, for which we get, exploiting $\E \hra \F$, $\vert n \vert = 1$ as well as boundedness of the domain and the time interval,
\begin{align*}
    \| \ccor (n \times u) \|_{\F^u} 
    &\le c \ccor \| v \|_{\E} < \infty.
    \intertext{An analogous argument leads to}
    \| \eps h \|_{\F^h} 
    &\le c \eps \| v \|_{\E} < \infty \quad \mbox{and} \quad \| \eps a \|_{\F^a} \le c \eps \| v \|_{\E} < \infty.
\end{align*}

By assumption, it readily follows that $\| (g_u,g_h,g_a)_{|(0,T)} \|_{\F} < \infty$ as well as $\| g \nabla H \|_{\F^u} < \infty$. 
Presuming that $f \in C_b^1(\R_+)$, proceeding as above and additionally employing \eqref{ass:P2} and \eqref{ass:P3}, we obtain
\begin{equation*}
\begin{aligned}
    \| S_h(v) \|_{\F^h}
    &\le \| f(\nicefrac{h}{a})a \|_{L^p(J;L^q(\Omega))} + \| (1-a) f(0) \|_{L^p(J;L^q(\Omega))}\\
    &\le c(2 \normf \| v \|_{\E} + \normf) < \infty, \quad \mbox{and}\\
    \| S_a(v) \|_{\F^a}
    &\le \left\| \frac{f(0)}{\kappa}(1-a) \right\|_{\F^a} + \left\| \frac{a}{2 h} S_h(v) \right\|_{\F^a}\\
    &\le c \normf \frac{4}{c_s} (1 + \| v \|_{\E}) + 8 \| S_h(v) \|_{\F^h} < \infty
\end{aligned}
\end{equation*}
for $v \in \overline{\B}_{R}(v_*)$.

Concerning the second part of the lemma, we remark that the only difference is in the treatment of $S_h$ and $S_a$.
Making use of $\vert f(t,\cdot) \vert \le \normftime$, we use an analogous strategy as above to deduce that
\begin{equation*}
\begin{aligned}
    \| S_h(\cdot,v) \|_{\F^h} 
    &\le c(2 \normftime \| v \|_{\E} + \normftime) < \infty, \quad \mbox{and}\\
    \| S_a(\cdot,v) \|_{\F^a}
    &\le c \normftime \frac{4}{c_s}(1+\| v \|_{\E}) + 8 \| S_h(\cdot,v) \|_{\F^h} < \infty.
\end{aligned}
\end{equation*}
This completes the proof.
\end{proof}

We next show that the term $F_{\eps,p}$ on the right-hand side satisfies a Lipschitz condition.
\begin{lemma}\label{lem:lipschitzcond}
Let $p,q \in (1,\infty)$ such that $\frac{1}{2} + \frac{1}{p} + \frac{1}{q} < 1$.
Moreover, suppose that for $c_s > 0$ and $c_f > 0$ arbitrary, \eqref{ass:P2} holds true, and let $R_0>0$ be small enough such that \eqref{ass:P3} is valid for $v \in \overline{\B}_{R_0}(v_*)$.
Let also $(g_u,g_h,g_a)\colon \R \to X_0$ be $T$-periodic with $(g_u,g_h,g_a)_{|(0,T)} \in \F$.

Then there exist $C>0$ and $C_k>0$ such that for all $R \in (0,R_0)$, we have
\begin{equation}\label{eq:lipschitzcond}
    \| F_{\eps,p}(\cdot,v_1(\cdot)) - F_{\eps,p}(\cdot,v_2(\cdot)) \|_{\F}
    \le C(R+C_k) \| v_1 - v_2 \|_{\E} 
\end{equation}
for any $v_1$, $v_2 \in \overline{\B}_{R}(v_*)$.

In the situation of $f$ being explicitly time-dependent, we assume \eqref{ass:PT2} instead of \eqref{ass:P2}.
In that case, there are $C_t > 0$ and $C_{k,t} > 0$ such that for any $R \in (0,R_0)$, it holds that
\begin{equation}\label{eq:lipschitzcondtimedepf}
    \| F_{\eps,p,t}(\cdot,v_1(\cdot)) - F_{\eps,p,t}(\cdot,v_2(\cdot)) \|_{\F}
    \le C_t(R+C_{k,t}) \| v_1 - v_2 \|_{\E} 
\end{equation}
for any $v_1$, $v_2 \in \overline{\B}_{R}(v_*)$.
\end{lemma}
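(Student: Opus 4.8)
The plan is to estimate the difference $F_{\eps,p}(\cdot,v_1)-F_{\eps,p}(\cdot,v_2)$ componentwise. First I observe that the terms not depending on $v$ — namely $g\nabla H$ and the forcings $g_u,g_h,g_a$ — cancel in the difference, so only the quadratic convection terms $u\cdot\nabla u$, $\divergence(uh)$, $\divergence(ua)$, the linear terms $\ccor(n\times u)$, $\eps h$, $\eps a$, and the thermodynamic terms $S_h(v)$, $S_a(v)$ remain. The guiding principle is that terms genuinely quadratic in the deviation $v_i-v_*$ contribute the factor $R$ — because the velocity component of $v_*$ vanishes, so $\|u_i\|_{\E^u}\le R$, and the gradients of the thickness/compactness components equal $\nabla(h_i-h_*)$, $\nabla(a_i-a_*)$, which are of order $R$ — whereas the linear pieces and the constant-size pieces produce the $R$-independent constant $C_k$.

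For the quadratic terms I would use the standard bilinear splitting, e.g.
$$
u_1\cdot\nabla u_1 - u_2\cdot\nabla u_2 = (u_1-u_2)\cdot\nabla u_1 + u_2\cdot\nabla(u_1-u_2),
$$
and then apply H\"older's inequality with the embeddings \eqref{eq:embEintoLalphapW1alphq}, exactly as in the proof of Lemma~\ref{lem:reg}. Since both factors $u_i$ obey $\|u_i\|_{\E^u}\le R$, this term is bounded by $CR\|v_1-v_2\|_\E$. For $\divergence(uh)$ I would write $u_1h_1-u_2h_2=(u_1-u_2)h_1+u_2(h_1-h_2)$ and apply the product rule; the outcome splits into the piece $h_*\,\divergence(u_1-u_2)$, which has constant size $h_*$ and is absorbed into $C_k$, and remaining pieces (involving $\nabla(h_1-h_*)$ or the factor $u_2$) each carrying an $O(R)$ factor. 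The linear terms are immediate: using $\E\hra\F$, $|n|=1$ and boundedness of $\Omega$ and $J$, the Coriolis term is bounded by $c\,\ccor\|v_1-v_2\|_\E$ and the $\eps$-terms by $c\,\eps\|v_1-v_2\|_\E$, both feeding into $C_k$.

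The main obstacle is the thermodynamic pair $S_h,S_a$. Here I would exploit $f\in C_b^1(\R_+)$ together with the two-sided bounds $\tfrac14c_s<\kappa\le h_i,a_i<4c_s$ from \eqref{ass:P3}, which keep the arguments $h_i/a_i$ in a fixed compact subset of $\R_+$ and keep the quotients $a_i/(2h_i)$ bounded and Lipschitz in $(h,a)$. For $S_h$ one writes
$$
f\!\Big(\tfrac{h_1}{a_1}\Big)a_1 - f\!\Big(\tfrac{h_2}{a_2}\Big)a_2 = \Big(f\!\big(\tfrac{h_1}{a_1}\big)-f\!\big(\tfrac{h_2}{a_2}\big)\Big)a_1 + f\!\big(\tfrac{h_2}{a_2}\big)(a_1-a_2),
$$
estimating the first bracket by $\normf\,\big|\tfrac{h_1}{a_1}-\tfrac{h_2}{a_2}\big|$, which in turn is controlled by $|h_1-h_2|$ and $|a_1-a_2|$ thanks to the lower bound on $a_i$; with $\normf<c_f$ this gives a Lipschitz bound with constant proportional to $c_f$, an $R$-independent contribution to $C_k$. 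The delicate point is $S_a$, whose definition branches on the sign of $f(0)$ and on the sign of $S_h$. The branch on $\mathrm{sign}\,f(0)$ is harmless, being fixed once $f$ is given. For the branch on $\mathrm{sign}\,S_h$ I would rewrite the relevant summand as $\frac{a}{2h}\min\{S_h(v),0\}$, observe that $s\mapsto\min\{s,0\}$ is globally Lipschitz and that the prefactor $\frac{a}{2h}$ is Lipschitz and bounded on the region fixed by \eqref{ass:P3} (using the lower bound on $h_i$); the product rule together with the $L^\infty$-bounds from \eqref{eq:embEintoLinftyLinfty} then lets $S_a$ inherit the Lipschitz estimate from $S_h$ uniformly across the boundary $S_h=0$. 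Collecting all contributions and splitting them into the $O(R)$ convection part and the $O(1)$ part governed by $\ccor$, $\eps$, $h_*$ and $c_f$ yields the asserted bound $C(R+C_k)\|v_1-v_2\|_\E$.

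For the explicitly time-dependent situation the argument is verbatim the same: every occurrence of $S_h(v)$, $S_a(v)$ is replaced by $S_h(t,v)$, $S_a(t,v)$, and $\normf<c_f$ is replaced by $\normftime<c_f$ from \eqref{ass:PT2}. Since $f\in C_b^1(J\times\R_+)$, the mean-value and product/quotient estimates apply pointwise in $t$ and integrate in $t$ to deliver \eqref{eq:lipschitzcondtimedepf} with constants $C_t,C_{k,t}$ of the same structure.
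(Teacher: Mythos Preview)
Your proposal is correct and follows essentially the same approach as the paper: term-by-term splitting via bilinear identities, H\"older's inequality combined with the embeddings \eqref{eq:embEintoLalphapW1alphq} and \eqref{eq:embEintoLinftyLinfty}, and the mean value theorem for the $S_h$-piece, with the constants organized into an $O(R)$ part and an $R$-independent part $C_k$ built from $\ccor$, $\eps$, $h_*$, $a_*$ and $c_f$. Your treatment of the sign-branching in $S_a$ via $s\mapsto\min\{s,0\}$ is in fact slightly more explicit than the paper's own argument, which simply estimates both summands as if present; otherwise the two proofs coincide.
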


\begin{proof}
Let $R_0 > 0$ be as above, and consider $R \in (0,R_0)$ arbitrary as well as $v_1$, $v_2 \in \overline{\B}_R(v_*)$. 
As in the proof of Lemma~\ref{lem:reg}, we treat each term separately, and we start with $u \nabla u$. 
Then 
\begin{equation*}
\begin{aligned}
    \| - u_1 \nabla u_1 + u_2 \nabla u_2 \|_{\F^u}
    &\le \| u_1 \nabla (u_1 - u_2) \|_{L^p(J;L^q(\Omega)^2)} + \| (u_1 - u_2) \nabla u_2 \|_{L^p(J;L^q(\Omega)^2)}\\
    &\le \| u_1 \|_{L^{\alpha p}(J;L^{\alpha q}(\Omega)^2)} \| \nabla (u_1 - u_2) \|_{L^{\alpha^\prime p}(J;L^{\alpha^\prime q}(\Omega)^{2 \times 2})}\\
    &\quad + \| u_1 - u_2 \|_{L^{\alpha^\prime p}(J;L^{\alpha^\prime q}(\Omega)^2)} \| \nabla u_2 \|_{L^{\alpha p}(J;L^{\alpha q}(\Omega)^{2 \times 2})}\\
    &\le c (\| u_1 \|_{L^{\alpha p}(J;W^{1,\alpha q}(\Omega)^2)} + \| u_2 \|_{L^{\alpha p}(J;W^{1,\alpha q}(\Omega)^2)}) \| u_1 - u_2 \|_{L^{\alpha^\prime p}(J;W^{1,\alpha^\prime q}(\Omega)^2)}\\
    &\le c (\| u_1 \|_{\E^u} + \| u_2 \|_{\E^u}) \| u_1 - u_2 \|_{\E^u},\\
    &\le c R \| v_1 - v_2 \|_{\E}.
\end{aligned}
\end{equation*}

We continue with the Coriolis term, which we treat as in the proof of Lemma~\ref{lem:reg} and for which we use \eqref{ass:P2} to obtain
\begin{align*}
    \| \ccor (n \times u_1) - \ccor (n \times u_2) \|_{\F^u}
    &\le c \ccor \| v_1 - v_2 \|_{\E} < c c_s \| v_1 - v_2 \|_{\E}.
    \intertext{Similarly, we get}
    \| \eps h_1 - \eps h_2 \|_{\F^h} 
    &\le c \eps \| v_1 - v_2 \|_{\E} \quad \mbox{and} \quad \| \eps a_1 - \eps a_2 \|_{\F^a} \le c \eps \| v_1 - v_2 \|_{\E}.
\end{align*}

The next terms under consideration are $\divergence(u h)$ and $\divergence(u a)$, and we have
\begin{align*}
    \| \divergence(u_1 h_1) - \divergence(u_2 h_2) \|_{\F^h} 
    &\le \| h_1 \divergence(u_1) - h_2 \divergence(u_2) \|_{\F^h} + \| u_1 \nabla h_1 - u_2 \nabla h_2 \|_{\F^h}.
    \intertext{We take care of the two addends individually. First, analogously as in the proof of Lemma~\ref{lem:reg} and by an application of \eqref{ass:P2}, we conclude that}
    \| h_1 \divergence(u_1) - h_2 \divergence(u_2) \|_{\F^h}
    &\le \| (h_1 - h_2) \divergence(u_1) \|_{\F^h} + \| h_2 (\divergence(u_1) - \divergence(u_2)) \|_{\F^h}\\
    &\le c (2 R + h_*) \| v_1 - v_2 \|_{\E}\\
    &\le c (R + c_s) \| v_1 - v_2 \|_{\E}.
    \intertext{Likewise, we obtain}
    \| u_1 \nabla h_1 - u_2 \nabla h_2 \|_{\F^h}
    &\le \| u_1 \nabla (h_1 - h_2) \|_{\F^h} + \| (u_1 - u_2) \nabla h_2 \|_{\F^h}\\
    &\le c (R + c_s) \| v_1 - v_2 \|_{\E}.
\end{align*}
In total, using the analogy for $\divergence(u a)$, we derive that
\begin{equation*}
    \| \divergence(u_1 h_1) - \divergence(u_2 h_2) \|_{\F^h} \le c (R + c_s) \| v_1 - v_2 \|_{\E} \quad \mbox{and} \quad
\| \divergence(u_1 a_1) - \divergence(u_2 a_2) \|_{\F^a} \le c (R + c_s) \| v_1 - v_2 \|_{\E}.
\end{equation*}

The last terms to be considered are the thermodynamic terms $S_h$ and $S_a$ from \eqref{def:sh} and \eqref{def:sa}, respectively, because the periodic terms $g_u$, $g_h$ and $g_a$ as well as the term arising from the force due to changing sea surface tilt $g \nabla H$ are constant in $v$. The mean value theorem in conjunction with \eqref{ass:P2} and \eqref{ass:P3} implies
\begin{equation*}
\begin{aligned}
    \| S_h(v_1) - S_h(v_2) \|_{\F^h}
    &\le c \normf \| v_1 - v_2 \|_{\E} + \| (f(\nicefrac{h_1}{a_1}) - f(\nicefrac{h_2}{a_2})) a_1 \|_{\F^h} + \| f(\nicefrac{h_2}{a_2})(a_1 - a_2) \|_{\F^h}\\
    &\le c \normf \| v_1 - v_2 \|_{\E} + \| f(\nicefrac{h_1}{a_1}) - f(\nicefrac{h_2}{a_2}) \|_{L^{\alpha p}(J;L^{\alpha q}(\Omega))} \| a_1 \|_{L^{\alpha^\prime p}(J;L^{\alpha^\prime q}(\Omega))}\\
    &\le c \normf \left(1 + \frac{1}{c_s}(a_* + R)\right) \| v_1 - v_2 \|_{\E}\\
    &< c \left(\frac{c_f}{c_s}R + c_f\right) \| v_1 - v_2 \|_{\E}.
\end{aligned}
\end{equation*}
A similar argument exhibits that
\begin{equation*}
\begin{aligned}
    \| S_a(v_1) - S_a(v_2) \|_{\F^a}
    &\le c \normf \frac{1}{c_s} \| v_1 - v_2 \|_{\E} + \left\| \frac{a_1}{2 h_1} (S_h(v_1) - S_h(v_2)) \right\|_{\F^a} + \left\| \left(\frac{a_1}{2 h_1} - \frac{a_2}{2 h_2}\right) S_h(v_2) \right\|_{\F^a}\\
    &\le c \normf \left(\frac{1}{c_s}R + \frac{1}{c_s} + 1\right) \| v_1 - v_2 \|_{\E} + c \frac{1}{c_s} \| S_h(v_2) \|_{\F^h} \| v_1 - v_2 \|_{\E}\\
    &\le c \normf \left(\frac{1}{c_s}R + \frac{1}{c_s} + 1\right)\| v_1 - v_2 \|_{\E}\\
    &< c \left(\frac{c_f}{c_s}R + \frac{c_f}{c_s} + c_f\right)\| v_1 - v_2 \|_{\E},
\end{aligned}
\end{equation*}
showing that \eqref{eq:lipschitzcond} is valid.

Regarding the second part of the lemma, we employ a similar argument as in the proof of Lemma~\ref{lem:reg}, i.e., we estimate $\vert f(t,\cdot) \vert$ and $\nabla_x f(t,x) \vert$ by $\normftime$.
We then infer that
\begin{equation*}
\begin{aligned}
    \| S_h(\cdot,v_1) - S_h(\cdot,v_2) \|_{\F^h}
    &\le c \left(\frac{c_f}{c_s}R + c_f\right) \| v_1 - v_2 \|_{\E}, \quad \mbox{as well as}\\
    \| S_a(\cdot,v_1) - S_a(\cdot,v_2) \|_{\F^a}
    &\le c \left(\frac{c_f}{c_s}R + \frac{c_f}{c_s} + c_f\right)\| v_1 - v_2 \|_{\E}.
\end{aligned}
\end{equation*}
This shows that \eqref{eq:lipschitzcondtimedepf} is also valid.
\end{proof}

For later use, we concatenate the previous estimates and thus determine $C$ and $C_k$ from the statement of Lemma~\ref{lem:lipschitzcond} precisely.

\begin{remark}\label{rem:shapeofconstants}
The above estimates show that 
\begin{equation*}
    \| F_{\eps,p}(\cdot,v_1(\cdot)) - F_{\eps,p}(\cdot,v_2(\cdot)) \|_\F \le C(R + C_k) \| v_1 - v_2 \|_{\E},
\end{equation*}
where
\begin{equation}\label{eq:shapeofC}
    C = c\left(1 + \frac{c_f}{c_s}\right),
\end{equation}
and
\begin{equation}\label{eq:shapeofCk}
    C_k
    = \left(\eps + c_s + \left(1+\frac{1}{c_s}\right)c_f\right) \frac{1}{1 + \frac{c_f}{c_s}}.
\end{equation}
We remark that also $C_t$ and $C_{k,t}$ take a similar form as $C$ and $C_k$, respectively.
\end{remark}

The last preparatory result concerns the quasilinear matrix $A_\eps$.

\begin{lemma}\label{lem:lipschitzcondmatrix}
Let $p,q \in (1,\infty)$ be such that $\frac{1}{2} + \frac{1}{p} + \frac{1}{q} < 1$.
Additionally, assume that for $c_s > 0$ and $c_f > 0$ arbitrary, \eqref{ass:P2}, or, in the situation of $f$ being explicitly time-dependent, \eqref{ass:PT2}, holds true, and let $R_0$ be sufficiently small such that \eqref{ass:P3} is satisfied for $v \in \overline{\B}_{R_0}(v_*)$.
Moreover, let $V \subset X_\gamma$ open be such that for $v \in V$, it holds that $h \ge \kappa$ and $a > 0$.
Then $A_\eps \colon V \to \mathcal{L}(X_1,X_0)$ is a family of closed linear operators, and for any $R \in (0,R_0)$, there is $L(R) > 0$ such that
\begin{equation}\label{eq:lipschitzcondquasilinmatrix}
    \| A_\eps(v_1(\cdot))w(\cdot) - A_\eps(v_2(\cdot))w(\cdot) \|_\F 
    \le L(R) \| v_1 - v_2 \|_{\E} \| w \|_{\E}
\end{equation}
for all $v_1$, $v_2 \in \overline{\B}_R(v_*)$ and for any  $w \in X_1$.
\end{lemma}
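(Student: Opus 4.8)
The plan is to establish the sharper pointwise-in-time estimate
\[
\| A_\eps(v_1) - A_\eps(v_2) \|_{\mathcal{L}(X_1,X_0)} \le L(R)\, \| v_1 - v_2 \|_{X_\gamma}, \qquad v_1,v_2 \in \overline{\B}_R(v_*) \cap V,
\]
with the ball now taken in $X_\gamma$, and then to deduce \eqref{eq:lipschitzcondquasilinmatrix} by integrating in time and invoking the embedding \eqref{eq:embeddinginbuc}. First I would observe that in the difference $A_\eps(v_1)w - A_\eps(v_2)w$ the two diagonal blocks $-d_h\Delta_N+\eps$ and $-d_a\Delta_N+\eps$, as well as every occurrence of $\eps$, are independent of $v$ and hence cancel; only the first block row survives. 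Writing $w=(w^u,w^h,w^a)$, the task reduces to estimating, in $X_0^u=L^q(\Omega;\R^2)$, the three contributions coming from $\tfrac{1}{\rice h}A_D^H(v)$ and from the two off-diagonal coefficients $\tfrac{\partial_h P(h,a)}{2\rice h}$ and $\tfrac{\partial_a P(h,a)}{2\rice h}$ multiplying $\nabla w^h$ and $\nabla w^a$.

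The decisive structural input is that, under $\tfrac12+\tfrac1p+\tfrac1q<1$, one has $X_\gamma \hra W^{1,\infty}(\Omega)^4$ --- the same Sobolev bookkeeping as in Lemma~\ref{lem:reg}, now applied to the gradient rather than to the function itself --- so that for $v\in\overline{\B}_R(v_*)$ the quantities $\nabla u$, $\nabla h$, $\nabla a$, $h$, $a$ are bounded in $L^\infty(\Omega)$, with bounds governed by $R$ and $v_*$. Together with \eqref{ass:P3} (so $h\ge\kappa$, $a>0$) this controls $P$ and keeps $\triangle_\delta(\eps(u))\ge\sqrt{\delta}$ bounded away from zero. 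Consequently the second-order coefficients $a_{ij}^{kl}(\nabla u,P)$ and the lower-order coefficients of $\cA^H$ are smooth in their arguments and, thanks to the regularization, have derivatives that remain bounded on the admissible range; by the mean value theorem they are Lipschitz there with a constant depending only on $\delta$ and $R$.

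With these bounds the three contributions become routine. For the off-diagonal terms, $h\mapsto 1/h$, $\partial_h P=p^* e^{-c(1-a)}$ and $\partial_a P=p^* c\,h\,e^{-c(1-a)}$ are Lipschitz in $(h,a)$ on the admissible range, so their differences are $\le C\|v_1-v_2\|_{L^\infty}\le C\|v_1-v_2\|_{X_\gamma}$ in $L^\infty(\Omega)$ and, multiplied against $\nabla w^{h},\nabla w^{a}\in L^q(\Omega)$, give the claimed bound. For the top-left block I would split
\[
\tfrac{1}{\rice h_1}A_D^H(v_1)-\tfrac{1}{\rice h_2}A_D^H(v_2)
=\tfrac{1}{\rice h_1}\bigl(A_D^H(v_1)-A_D^H(v_2)\bigr)+\Bigl(\tfrac{1}{\rice h_1}-\tfrac{1}{\rice h_2}\Bigr)A_D^H(v_2).
\]
The second summand is handled by the Lipschitz bound for $1/h$ together with $\|A_D^H(v_2)w^u\|_{L^q}\le C\|w^u\|_{W^{2,q}}$, the latter following from boundedness of the coefficients of $A_D^H(v_2)$. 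The first summand is the heart of the matter: the coefficient Lipschitz estimate gives $|a_{ij}^{kl}(\nabla u_1,P_1)-a_{ij}^{kl}(\nabla u_2,P_2)|\le C(|\nabla(u_1-u_2)|+|P_1-P_2|)\le C\|v_1-v_2\|_{X_\gamma}$ in $L^\infty(\Omega)$, and multiplying against the second derivatives $D_kD_l w^u\in L^q(\Omega)$ (respectively the first derivatives in the lower-order remainder $\mathbb{S}\eps(w^u)\in L^q$) produces the bound $\le L(R)\|v_1-v_2\|_{X_\gamma}\|w\|_{X_1}$. Summing the three contributions gives the pointwise operator-norm estimate; integrating $\|A_\eps(v_1(t))w(t)-A_\eps(v_2(t))w(t)\|_{X_0}\le L(R)\|v_1(t)-v_2(t)\|_{X_\gamma}\|w(t)\|_{X_1}$ over $t\in(0,T)$, and using $\sup_t\|v_1(t)-v_2(t)\|_{X_\gamma}\le c\|v_1-v_2\|_\E$ from \eqref{eq:embeddinginbuc} together with $\|w\|_{L^p(0,T;X_1)}\le\|w\|_\E$, yields \eqref{eq:lipschitzcondquasilinmatrix}. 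That each $A_\eps(v)\in\mathcal{L}(X_1,X_0)$ is closed follows from the very same coefficient bounds: $A_\eps(v)$ is an elliptic second-order realization with $L^\infty$ coefficients combined with the closed Neumann Laplacians, hence closed on $X_0$ with the fixed domain $X_1$.

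The main obstacle is precisely the top-order difference $A_D^H(v_1)-A_D^H(v_2)$ acting on $w^u$. Because $w$ carries two spatial derivatives and $w\in X_1$ places $D^2w$ only in $L^q$ --- with no room for higher integrability, in contrast to the first-order products treated in Lemma~\ref{lem:reg} --- the coefficient difference, which contains $\nabla(u_1-u_2)$, must be estimated in $L^\infty$. This is exactly why one needs $X_\gamma\hra W^{1,\infty}$, and hence where the hypothesis $\tfrac12+\tfrac1p+\tfrac1q<1$ is genuinely used; the role of the regularization parameter $\delta>0$ is to render the nonlinear coefficients globally Lipschitz so that this $L^\infty$ estimate is available on the whole admissible range. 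The explicitly time-dependent case (with \eqref{ass:PT2} in place of \eqref{ass:P2}) is identical, since the coefficients of $A_\eps$ are unaffected by the time dependence of $f$.
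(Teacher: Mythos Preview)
Your proposal is correct and follows essentially the same route as the paper: first establish the pointwise-in-time estimate $\|A_\eps(v_1)w-A_\eps(v_2)w\|_{X_0}\le l(R)\|v_1-v_2\|_{X_\gamma}\|w\|_{X_1}$, then pass to $\F$ via the embedding $\E\hra\mathrm{BUC}([0,T];X_\gamma)$ from \eqref{eq:embeddinginbuc}. The paper simply cites Lemma~6.2 of \cite{BDHH21} for the pointwise estimate and Section~3 of \cite{HS20} for the passage to $\F$, whereas you have spelled out the mechanism---the cancellation of the $v$-independent diagonal blocks, the splitting of $\tfrac{1}{\rice h}A_D^H(v)$, the Lipschitz dependence of the regularized coefficients $a_{ij}^{kl}(\nabla u,P)$ on $(\nabla u,P)$, and the crucial embedding $X_\gamma\hra C^1(\overline{\Omega})^4$ under $\tfrac12+\tfrac1p+\tfrac1q<1$---that underlies that citation.
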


\begin{proof}
The fact that $A_\eps\colon V \to \mathcal{L}(X_1,X_0)$ is a family of closed linear operators follows from results in \cite{BDHH22}. 
Using a similar argument as in Lemma~$6.2$ of \cite{BDHH22} involving that $h \ge \frac{c_s}{4}$, we find that for every $R \in (0,R_0)$ there is $l(R) > 0$ such that
\begin{equation}\label{eq:A1analogon}
    \| A_\eps(v_1)w - A_\eps(v_2)w \|_{X_0} 
    \le l(R) \| v_1 - v_2 \|_{X_\gamma} \| w \|_{X_1}
\end{equation}
for $v_1$, $v_2 \in \overline{\B}_R(v_*)$ and $w \in X_1$.

Arguing as in Section~$3$ of \cite{HS20}, we exploit \eqref{eq:A1analogon} and \eqref{eq:embeddinginbuc} to get
\begin{equation*}
\begin{aligned}
    \| A_\eps(v_1(\cdot))w(\cdot) - A_\eps(v_2(\cdot))w(\cdot) \|_{\F} 
    &\le l(R) \| v_1 - v_2 \|_{\mathrm{BUC}([0,T];X_\gamma)} \| w \|_{\E}\\
    &\le l(R) C_{\E} \| v_1 - v_2 \|_{\E} \| w \|_{\E},
\end{aligned}
\end{equation*}
where $C_{\E}$ denotes the embedding constant in \eqref{eq:embeddinginbuc}. 
Setting $L(R) := l(R) C_{\E}$ finally yields \eqref{eq:lipschitzcondquasilinmatrix}.
\end{proof}

\section{Proof of the main results}\label{sec:proofmainresults}

The proof of Theorem~\ref{thm:periodicsolquasilin} relies on an application of the contraction mapping principle after rewriting the quasilinear time periodic problem as a linearized problem with nonlinear right-hand side.
We then use the notion of maximal periodic $L^p$-regularity as explained in Section~\ref{sec:maxperiodicreg} as well as the Lipschitz estimates established in Section~\ref{sec:proofprepresults}.

\begin{proof}[Proof of Theorem~\ref{thm:periodicsolquasilin}]
Let $R \in (0,R_0)$ for $R_0 > 0$ as in Lemmas~\ref{lem:reg}, \ref{lem:lipschitzcond} and \ref{lem:lipschitzcondmatrix}. For $v \in \overline{\B}_R(v_*)$ and $A_\eps(v_*)$ as in \eqref{eq:Aeps}, we define the map $\Phi \colon \overline{\B}_R(v_*) \to \overline{\B}_R(v_*)$ by $\Phi(v) := w$, with $w$ being the unique solution to
\begin{equation*}
  \left\{ \begin{array}{ll}
	w^\prime(t) + A_\eps(v_*) w(t) = F_{\eps,p}(t,v(t)) + \left(A_\eps(v_*) - A_\eps(v(t))\right)v(t), \; \quad t \in (0,T), \\[2mm]
	w(0) = w(T).  \\[2mm]
	\end{array} \right.
\end{equation*}

In order to apply the contraction mapping principle, we verify that $\Phi$ is a selfmap and a contraction. To this end, let  $v$, $\vbar \in \overline{\B}_R(v_*)$. 
First, we note that $v_*$ is the unique solution to
\begin{equation*}
  \left\{ \begin{array}{ll}
	w_*^\prime(t) + A_\eps(v_*) w_*(t) = A_\eps(v_*)v_*, \; \quad t \in (0,T), \\[2mm]
	w_*(0) = w_*(T),  \\[2mm]
	\end{array} \right.
\end{equation*}
since  $v_*^\prime(t) + A_\eps(v_*)v_* = A_\eps(v_*)v_*$, which follows from $v_*$ being especially constant in time. By linearity, $\Phi(v) - v_*$ is the unique solution to
\begin{equation*}
  \left\{ \begin{array}{ll}
	\Tilde{w}^\prime(t) + A_\eps(v_*) \Tilde{w}(t) = F_{\eps,p}(t,v(t)) + \left(A_\eps(v_*) - A_\eps(v(t))\right)v(t) - A_\eps(v_*)v_*, \; \quad t \in (0,T), \\[2mm]
	\Tilde{w}(0) = \Tilde{w}(T).  \\[2mm]
	\end{array} \right.
\end{equation*}
We deduce from the latter observation, from maximal periodic $L^p$-regularity of $A_\eps(v_*)$ by Proposition~\ref{prop:maxperiodicreghiblerop}, from Lemma~\ref{lem:reg} and from \eqref{eq:consofmaxperiodicreg} in the first step and from Lemmas~\ref{lem:lipschitzcond} and \ref{lem:lipschitzcondmatrix} as well as the identity $F_{\eps,p}(t,v_*) - A_\eps(v(t))v_* = F_p(t,v_*)$ in the second step that
\begin{equation}\label{eq:selfmap}
    \begin{aligned}
    \| \Phi(v) - v_* \|_{\E}
    &\le M \| F_{\eps,p}(\cdot,v(\cdot)) - F_{\eps,p}(\cdot,v_*) \|_{\F} + M \| F_{\eps,p}(\cdot,v_*) - A_\eps(v(\cdot))v_* \|_{\F}\\
    & \quad+ M \| \left(A_\eps(v_*) - A_\eps(v(\cdot)) \right)(v(\cdot) - v_*) \|_{\F}\\
    &\le M C (R + C_k) \| v - v_* \|_{\E} + M \| F_{p}(\cdot,v_*) \|_{\F} + M L(R) \| v - v_* \|_{\E}^2, 
    \end{aligned}
\end{equation}
where $C$ and $C_k$ are as in \eqref{eq:shapeofC} and \eqref{eq:shapeofCk}, respectively. 

Likewise, and additionally making use of the fact that $A_\eps(v(t))v_* = A_\eps(\vbar(t))v_* = A_\eps(v_*)v_*$, we infer that
\begin{equation}\label{eq:contraction}
    \begin{aligned}
    \| \Phi(v) - \Phi(\vbar) \|_{\E}
    &\le M \| F_{\eps,p}(\cdot,v(\cdot)) - F_{\eps,p}(\cdot,\vbar(\cdot)) \|_{\F} + M \| \left(A_\eps(v_*) - A_\eps(v(\cdot))\right)(v(\cdot) - \vbar(\cdot)) \|_{\F}\\
    & \quad \quad + M \| \left(A_\eps(v(\cdot)) - A_\eps(\vbar(\cdot))\right)(\vbar(\cdot) - v_*) \|_{\F}\\
    &\le M C (R + C_k) \| v - \vbar \|_{\E} + M L(R) \| v - v_* \|_{\E} \| v - \vbar \|_{\E}\\
    & \quad \quad + M L(R) \| v - \vbar \|_{\E} \| \vbar - v_* \|_{\E}\\
    &\le \left(M C (R + C_k) + 2 M L(R) R\right) \| v - \vbar \|_{\E}. 
    \end{aligned}
\end{equation}

The representation of $C_k$ given in \eqref{eq:shapeofCk} reveals that there are $c_s > 0$ and $c_f > 0$ with the following property: 
If we assume \eqref{ass:P2} and \eqref{ass:P3}, and if we consider $\eps > 0$ sufficiently small, then  $C_k \le \frac{1}{4 M C}$. 
More precisely, choosing $\eps < \frac{1}{12 M c}$ and taking into account $c_s \le \frac{1}{12 M c}$ as well as $c_f < \frac{c_s}{12 M (c_s + 1)c}$, we derive that
\begin{equation*}
    C_k = \left(\eps + c_s + \left(1+\frac{1}{c_s}\right)c_f\right) \frac{1}{1 + \frac{c_f}{c_s}} < \left(\frac{1}{12 Mc} + \frac{1}{12 Mc} + \frac{1}{12 Mc}\right) \frac{1}{1 + \frac{c_f}{c_s}} = \frac{1}{4 M c(1+\frac{c_f}{c_s})} = \frac{1}{4 M C}.
\end{equation*}

We additionally consider $R \le \min\left(R_0, \frac{1}{4 M C}, \frac{1}{8 M L(R)}\right)$. Moreover, the choice $\delta = \delta(R) = \frac{R}{4 M}$ yields that
\begin{equation*}
\begin{aligned}
    \| F_{p}(\cdot,v_*) \|_\F 
    &= \| \left(- g \nabla H + g_u(\cdot), S_h(v_*) + g_h(\cdot), S_a(v_*) + g_a(\cdot)\right) \|_\F\\
    &\le \| g \nabla H \|_{\F^u} + \| S_h(v_*) \|_{\F^h} + \| S_a(v_*) \|_{\F^a} + \| \left(g_u(\cdot), g_h(\cdot), g_a(\cdot)\right)_{|(0,T)} \|_\F < \delta = \frac{R}{4 M}
\end{aligned}
\end{equation*}
by the smallness assumption on $(g_u,g_h,g_a)$ in the statement of the theorem and in view of \eqref{ass:P1}.

Inserting the smallness of $C_k$, $R$ and $\delta$ in \eqref{eq:selfmap} and \eqref{eq:contraction}, we then conclude that
\begin{align*}
    \| \Phi(v) - v_* \|_{\E} 
    &\le \frac{1}{2} \| v - v_* \|_{\E} + \frac{1}{4}R + \frac{1}{8} \| v - v_* \|_{\E} \le \frac{7}{8}R
\intertext{and}
    \| \Phi(v) - \Phi(\vbar) \|_{\E} 
    &\le \left(\frac{1}{2} + \frac{1}{4}\right) \| v - \vbar \|_{\E} \le \frac{3}{4} \| v - \vbar \|_{\E}.
\end{align*}

Therefore, $\Phi\colon \overline{\B}_R(v_*) \to \overline{\B}_R(v_*)$ is a selfmap and a contraction, so we may exploit the contraction mapping theorem to get a unique solution $v$ on the period $(0,T)$, satisfying $v(0) = v(T)$. 
We may thus extend $v$ periodically to the whole real line and thereby obtain the $T$-periodic solution to \eqref{eq:quasilinperiodacp}.
\end{proof}

Corollary~\ref{cor:concreteshapeofgu} is a direct application of Theorem~\ref{thm:periodicsolquasilin}, so it remains to show Corollary~\ref{cor:periodicicegrowthrate}.

\begin{proof}[Proof of Corollary~\ref{cor:periodicicegrowthrate}]
Let again $R \in (0,R_0)$ for $R_0 > 0$ as in Lemmas~\ref{lem:reg}, \ref{lem:lipschitzcond} and \ref{lem:lipschitzcondmatrix}.
Given $v \in \overline{\B}_R(v_*)$, we define the map $\Phi_t\colon \overline{\B}_R(v_*) \to \overline{\B}_R(v_*)$ by $\Phi_t(v) := w$, where $w$ is the unique solution to
\begin{equation*}
  \left\{ \begin{array}{ll}
	w^\prime(t) + A_\eps(v_*) w(t) = F_{\eps,p,t}(t,v(t)) + \left(A_\eps(v_*) - A_\eps(v(t))\right)v(t), \; \quad t \in (0,T), \\[2mm]
	w(0) = w(T),  \\[2mm]
	\end{array} \right.
\end{equation*}
and $A_\eps(v_*)$ as in \eqref{eq:Aeps}.

We then proceed as in the proof of Theorem~\ref{thm:periodicsolquasilin}, this time employing the respective second part in Lemmas~\ref{lem:reg} and \ref{lem:lipschitzcond} and making use of similar identities as in the aforementioned proof.
As in the proof of Theorem~\ref{thm:periodicsolquasilin}, for $c_s > 0$ and $c_f > 0$ sufficiently small, we consider $R \le \min\left(R_0,\frac{1}{4 M C},\frac{1}{8 M L(R)}\right)$ and choose $\delta = \frac{R}{4 M}$.
The smallness of $(g_u,g_h,g_a)$ as well as the assumptions \eqref{ass:PT1}, \eqref{ass:PT2} and \eqref{ass:P3} then yield the estimates
\begin{equation*}
    \| \Phi_t(v) - v_* \|_{\E} \le \frac{7}{8}R \quad \mbox{and} \quad \| \Phi_t(v) - \Phi_t(\vbar) \|_{\E} \le \frac{3}{4} \| v - \vbar \|_{\E}.
\end{equation*}
Applying the contraction mapping principle to $\Phi_t$, we obtain a unique solution $v$ on the period $(0,T)$, which fulfills $v(0) = v(T)$.
The $T$-periodic solution to the original Cauchy problem then is created by extending the previous $v$ to the whole real line.

The structure of the thermodynamic terms $S_h$ and $S_a$ in the concrete case exhibits that they are $T$-periodic if the time-dependent ice growth rate function $f$ is $T$-periodic, which is valid by assumption, and if the variables $h$ and $a$ are $T$-periodic.
This is ensured by the fact that the solution $v$ discussed above is $T$-periodic by construction.
\end{proof}

\end{document}